\newcommand{\sig}{\sigma}
\newcommand{\lam}{\lambda}
\newcommand{\al}{\alpha}
\newcommand{\Om}{\Omega}
\newcommand{\R}{\mathbb{R}}
\newcommand{\C}{\mathbb{C}}
\newcommand{\Z}{\mathbb{Z}}
\newcommand{\D}{\mathbb{D}}
\numberwithin{equation}{section}
\theoremstyle{plain} 
\newtheorem*{schur}{Schur's Lemma}
\newtheorem{cor}[equation]{Corollary}
\theoremstyle{definition}
\theoremstyle{remark}
\newtheorem{rem}[equation]{Remark}
\begin{document}

\thanks{Partially supported by NSF Grant DMS-0074326}
\thanks{\today}

\title[Eigenfunctions on the Snowflake]
{Computing Eigenfunctions on the Koch Snowflake: \\ A New Grid and Symmetry.}

\author{John M. Neuberger}
\author{N\'andor Sieben}
\author{James W. Swift}

\email{
John.Neuberger@nau.edu,
Nandor.Sieben@nau.edu,
Jim.Swift@nau.edu}

\address{
Department of Mathematics and Statistics,
Northern Arizona University PO Box 5717,
Flagstaff, AZ 86011-5717, USA
}

\subjclass[2000]{20C35, 35P10, 65N25} 
\keywords{Snowflake, symmetry, eigenvalue problem}

\begin{abstract}
In this paper we numerically solve the eigenvalue problem
$\Delta u + \lambda u = 0$ on the fractal region defined by the Koch Snowflake,
with zero-Dirichlet or zero-Neumann boundary conditions.
The Laplacian with boundary conditions is
approximated by a large symmetric matrix.
The eigenvalues and eigenvectors of this matrix are computed by ARPACK.
We impose the boundary conditions in a way that gives improved accuracy over
the previous computations of Lapidus, Neuberger, Renka \& Griffith.
We extrapolate the results for grid spacing $h$ to the limit $h \rightarrow 0$ in order
to estimate eigenvalues of the Laplacian and compare our results to those of Lapdus et al.
We analyze the symmetry of the region to explain the multiplicity-two eigenvalues,
and present a canonical choice of the two eigenfunctions that span each
two-dimensional eigenspace.
\end{abstract}

\maketitle

\begin{section}{Introduction.}

In this paper we approximate solutions to the two eigenvalue problems

\begin{equation}
\label{lpde}
\begin{array}{rlcrl}
        \Delta u + \lambda u = 0 & \textrm{in } \Omega & \hspace{1in}
& \Delta u + \lambda u = 0 & \textrm{in } \Omega\\
        u = 0  & \textrm{on }  {\partial \Omega}  ~ \mbox{(D)}&
& \displaystyle{\frac{\partial u}{\partial \eta}} = 0  & \textrm{on }  {\partial \Omega} ~ \mbox{(N)},
\end{array}
\end{equation}
where $\Delta$ is the Laplacian operator, and
$\Omega\subset\R^2$ is the (open) region whose boundary $\partial\Omega$ is the Koch snowflake.
For convenience, we refer to $\Om$ as the {\em Koch snowflake region}.
The boundary conditions are zero-Dirichlet, or zero-Neumann, respectively.

These boundary value problems must be interpreted in the variational sense (see
\cite{lap91}) but we avoid the subtleties of functional analysis by discretizing the problem.
We use a triangular grid of points to approximate the snowflake region.
Then, we identify $u: \Om \rightarrow \R$ with
$u \in \R^N$, where $N$ is the number of grid points in $\Om$.
That is,
$$
u(x_i) \approx u_i
$$
at grid points $x_i \in \R^2$, $i \in \{ 1, 2, 3, \ldots , N\}$.
The {\em discretized Laplacian} is the symmetric matrix $L$, with the property
$$
(-\Delta u)(x_i) \approx (L u)_i = \sum_{j = 1}^N L_{i j} u_j .
$$
Of course, a specific grid and a scheme for enforcing the boundary conditions
are needed to define $L$.  This is described in Section 2.
Then, the eigenvalues and eigenfunctions of $L$ approximate the eigenvalues
and eigenfunctions defined by (\ref{lpde}).
The eigenvalues and eigenvectors of $L$ are our approximations of the eigenvalues
$0\leq \lambda_1<\lambda_2 \leq \lambda_3 \leq \cdots\leq\lambda_k\cdots\to\infty$
and the corresponding eigenfunctions
$\{\psi_k\}_{k=1}^\infty$ of the negative Laplacian $-\Delta$.

The Koch snowflake is a well known fractal, with Hausdorff
dimension $\log_3 4$. Following Lapidus, Neuberger, Renka, and
Griffith \cite{lnrg}, we take our snowflake to be inscribed in a
circle of radius $\frac{\sqrt{3}}{3}$ centered about the origin.
With this choice, the polygonal approximations used in the fractal
construction have side length that are powers of $1/3$. In
\cite{lnrg}, a triangular grid with spacing $h = h_{\rm LNR}(\ell)
= 1/3^\ell$ was used to approximate the eigenfunctions. Here
$\ell$ is a positive integer indicating the mesh size and the {\em
level} of polygonal approximation to the fractal boundary. With
this choice of $h$, there are $N_{\rm NLR}(\ell)  = 1 + (4 \cdot
9^\ell - 9 \cdot 4^\ell)/5$ grid points in $\Om$, as well as $3
\cdot 4^\ell$ grid points on $\partial\Om$ (see Table
\ref{grid_table}). The zero-Dirichlet boundary conditions are
imposed by setting $u_i = 0$ at the grid points on the boundary.

We used a different triangular grid. We
found {\em more accurate} results with a {\em larger} $h$ by choosing the
grid spacing to be $h = h_{\rm NSS}(\ell) = 2/3^\ell$ and placing the boundary
between grid points.
This yields $N = N_{\rm NSS}(\ell) = (9^\ell - 4^\ell)/5$ grid points in the
snowflake region $\Om$.
No grid points are on $\partial\Om$ with our choice.
We use {\em ghost points}, which are outside the region, to enforce the boundary conditions,
as described in Section \ref{ghost}.
To compare our results with those of \cite{lnrg} we will use
$\lam^{(\ell)}_k$ to denote the $k^{\rm th}$
eigenvalue of $L$ at level $\ell$ with our method, and
$\mu^{(\ell)}_k$ to denote the eigenvalues published in
\cite{lnrg}.

\begin{table}[htbp]
\label{grid_table}
\begin{center}
\begin{tabular}{|c|c|c|c|c|c|c|}
\hline
$\ell$ & 1 & 2 & 3 & 4 & 5 & 6 \\
\hline
$N_{\rm NSS}(\ell)$ & 1 & 13 & 133 & 1261 & 11605 & 105469 \\
\hline
$N_{\rm LNR}(\ell)$ & 1 & 37 & 469 & 4789 & 45397 & 417781
\\ \hline
\end{tabular}
\vspace*{.2in}
\caption{
The number of interior grid points with $h = h_{\rm NSS}(\ell) = 2/3^\ell$ (the current work),
and with $h = h_{\rm LNR}(\ell) = 1/3^\ell$ as in \cite{lnrg}.
Note that our new grid has approximately 75\% fewer grid
points at the same $\ell$, when $\ell$ is large.
The region $\Om$ is open and the larger values of $N$ published
in \cite{lnrg} includes the grid points on the boundary.
}
\end{center}
\end{table}

A different approach, avoiding triangular grids altogether, can
be found in the unpublished thesis \cite{banjai}.  This work
uses the conformal mappings found in \cite{bt03}.

In \cite{ns}, the Gradient Newton Galerkin Algorithm (GNGA) was developed to investigate
existence, multiplicity, nodal structure, bifurcation, and symmetry of problems
of the form (\ref{pde}).  (This PDE is found in the concluding section.)
The GNGA requires as input an orthonormal basis
of a sufficiently large
subspace consisting of eigenfunctions of the Laplacian.
Since our eventual application concerns solving the nonlinear equation (\ref{pde}) on
the region $\Omega$ with fractal boundary,
we face the considerable challenge of first obtaining eigenfunctions
(solutions to the linear problem (\ref{lpde})) numerically.
In \cite{lnrg}, this was done using essentially the inverse power method with deflation
on a triangular grid.
They used approximating boundary polygons with vertices at grid points.

Using our new grid, we improve upon their results and are substantially successful in
obtaining a basis of such functions for a sufficiently large subspace for our future nonlinear needs.
We use the sophisticated numerical package ARPACK instead of deflation.
This software is based upon an algorithmic variant of the Arnoldi process called the Implicitly
Restarted Arnoldi Method (see \cite{arpack}) and is
ideally suited for finding the eigen-pairs of the large sparse matrices associated with
the discretization $L$ of the Laplacian.
It is easily implemented, requiring only a user-provided subroutine giving the action
of the linear map.
One of our innovations in investigating the snowflake
is taking the boundary to lie between grid points and using
ghost points just outside $\Omega$ when approximating the Laplacian at interior points
closest to the boundary.
This results in better approximations of true eigenvalues using fewer interior grid points
than achieved by \cite{lnrg} using the standard grid method of enforcing the boundary condition.
We support this claim by comparing our results via curve fitting data points
to predict the true values.

In Section 2, we describe in more detail the triangular grid and the accompanying second difference scheme
for approximating the Laplacian,
as well as the ARPACK implementation using this information to generate the basis of eigenfunctions.
In Section 3, we compare our numerical eigenvalue approximations to those obtained in \cite{lnrg}.
In particular, we perform Richardson extrapolations on both data sets.
In Section 4, we
apply representation theory to determine
the 8 possible symmetries that eigenfunctions (and approximating eigenvectors)
can have, given the $\D_6$ symmetry of the region $\Om$ and the approximating grids.
We consider this rigorous treatment of symmetry to be a key contribution of this paper.
This information is used for numerical post-processing to find symmetric ``canonical'' representatives
for multiple eigenvalues.
This catalog of the symmetries of basis elements will be used in an essential way in our subsequent
nonlinear bifurcation studies.
Section 4 also contains graphics depicting a selection of approximating eigenvectors
to both problems in (\ref{lpde}).
Section 5 gives a brief indication of how our new grid can be used when implementing GNGA
on related nonlinear problems (see \cite{nss2}).
Also, we discuss how the known symmetries can be exploited to reduce the number of integrations required by that scheme.

\end{section}
\begin{section}{Ghost Points and ARPACK.}
\label{ghost}

In approximating the Laplacian for functions defined on $\Om$,
we developed the grid technique depicted in Figure \ref{grid}.
As in \cite{lnrg},
at interior points with interior point neighbors one sees that the standard second-differencing
scheme when applied to a triangular grid leads to the approximation
$$
-\Delta u (x) \approx \frac{2}{3h^2}\left(6u(x) - \sum\{\hbox{6 neighbor values of }u \}\right).
$$
Our scheme differs, however, when computing approximations at interior grid points with neighbors that lie
{\it outside} the boundary.
In \cite{lnrg} the value of zero at boundary points (which lie on their grid) is used
to enforce the zero-Dirichlet boundary condition.
When we approximate the Laplacian at a point $x_i$ near the boundary,
we set $u = -u(x_i)$ at ghost points which are neighbors of $x_i$.
Specifically, for the level $\ell=2$ example found in Figure \ref{grid} and with the understanding that
$u_i \approx u(x_i)$, we have
$$
\begin{array}{l}
\displaystyle{-\Delta u (x_1) \approx \frac{2}{3h^2}(6u_1-(u_2+u_3+u_4+u_5+u_6+u_7)) }, \\ \\
\displaystyle{-\Delta u (x_2) \approx \frac{2}{3h^2}(6u_2-((-u_2)+u_1+u_3+u_7+u_8+u_9))
= \frac{2}{3h^2}(7u_2-(u_1+u_3+u_7+u_8+u_9)), } \\ \\
\displaystyle{-\Delta u (x_9) \approx \frac{2}{3h^2}(6u_9-((-u_9)+(-u_9)+(-u_9)+(-u_9)+u_2+u_3))
= \frac{2}{3h^2}(10u_9-(u_2+u_3)). }
\end{array}
$$
In the first line, there are no ghost points used because all
the neighbors of $x_1$ are interior points.
In the approximation at $x_2$,
$(-u_2)$ represents the value of $u$ at the ghost point $g_1$, as labelled in Figure \ref{grid}.
In the last line, $(-u_9)$
represents the value of $u$ at $g_1,\ldots,g_4$.
Note that the value of $u$ at $g_1$ is different in the
calculation at $x_2$ and $x_9$.  An alternative way of imposing the boundary conditions is to set
$u(g_1)$ to be the average of $u_2$, $u_8$, and $u_9$.  We did experiments with this alternative method, but
the results were not as accurate as the method we have described.

Our method of imposing the zero-Dirichlet boundary conditions can be summarized as
$$
-\Delta u (x) \approx \frac{2}{3h^2}((12-(\hbox{number of interior neighbors}))u(x) -
\sum \{\hbox{interior neighbor values of } u\}).
$$
The zero-Neumann boundary condition can be easily applied as well.
Indeed, one has the only slightly different formula (which agrees away from the boundary)
$$
-\Delta u (x) \approx\frac{2}{3h^2}((\hbox{number of interior neighbors})u(x) -
\sum\{\hbox{interior neighbor values of } u\}).
$$
To enforce the Neumann instead of the Dirichlet condition,
we only need to change 3 characters of our ARPACK code.
Specifically, one deletes the ``$12-$'' found in the zero-Dirichlet formulae above.
\begin{figure}[ht]
\begin{center}
\scalebox{1.25}{\includegraphics{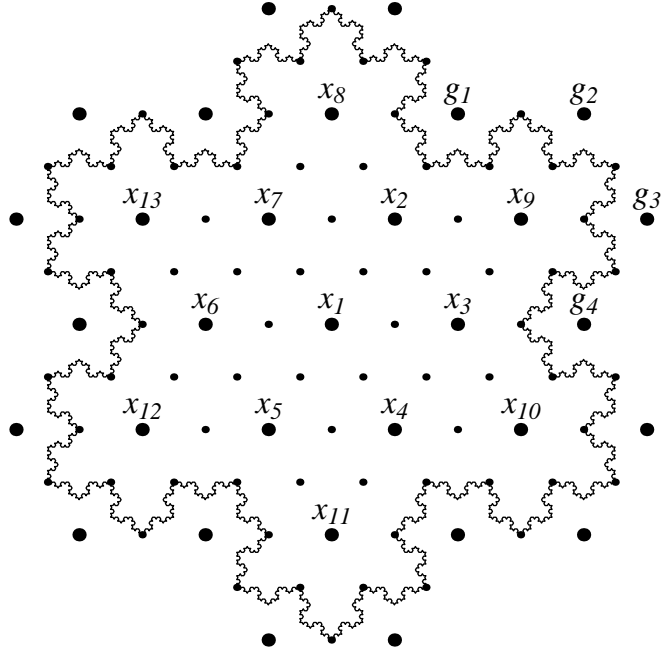}}
\caption{
The Koch snowflake $\partial\Om$ with
$N_{\rm NSS}(2)=13$ labelled grid points $\{x_i\}_{i=1}^{13}$ at level $\ell = 2$.
The grid used by
\cite{lnrg} consists of the $N_{\rm LNR}(2)=37$ large and small
points inside the snowflake, along with 48 small points on
$\partial \Om$.
The points outside of the snowflake, some labelled $g_i$, are
ghost points we use to enforce the boundary conditions.
For example,
$u(g_2) =-u(x_9)$ for Dirichlet boundary conditions and $u(g_2) = u(x_9)$
for Neumann boundary conditions.
On the other hand, $u$ takes on different values at $g_1$ when the Laplacian
is evaluated at $x_2$, $x_8$, or $x_9$.
}
\label{grid}
\end{center}
\end{figure}

The user-provided ARPACK subroutine takes as input a vector $v\in \R^{N}$ and outputs
$w\in \R^{N}$ with $w=Lv$ for the $N\times N$ matrix $L$ approximating the
discretized negative Laplacian, where for convenience we use $N$ to denote $N_{\rm NSS}(\ell)$.
This procedure is easily coded once an $N\times6$ dimensional array $t$ with neighbor
information is populated.
In the pseudocode in Figure~\ref{pseudocode},
$t(i,j)\in\{0,1,2,\ldots,N\}$ is the index of the $j^{\rm th}$ neighbor of the grid point $x_i$,
$j=1,\ldots,6$.
If $t(i,j)=0$ for some $j$, then the $i^{\rm th}$ interior point is near the boundary and has less than
6 interior point neighbors.
We let $k_i\in \{2,4,5,6\}$ denote the number of interior point neighbors of grid point $x_i$.
\begin{figure}[ht]
\begin{enumerate}
    \tt
    \item[] Loop for $i=1,\ldots,N$
    \begin{enumerate}
        \item[1.]  Set $$w(i) =\left\{\begin{array}{rl}
            (12-k_i)*v(i) & \ \hbox{for Dirichlet boundary conditions, or} \\
                   k_i*v(i) & \ \hbox{for Neumann}
                   \end{array}\right.$$
        \item[2.] Loop for $j=1,\ldots,6$
        \begin{enumerate}
            \item[a.] Find index $p = t(i,j)$ of $j^{\rm th}$ neighbor
            \item[b.] If $p\not=0$ then subtract neighbor value: $w(i) = w(i) - v(p)$
        \end{enumerate}
        \item[3.] Multiply by $h$ factor: $w(i) = 2 * w(i) / (3.0 * h*h)$
    \end{enumerate}
\end{enumerate}
\caption{Pseudo code for user-provided subroutine encoding the linear map
$v\mapsto w = Lv$.}
\label{pseudocode}
\end{figure}

The neighbor file is generated using the
\texttt{set} and \texttt{vector} data structures and the binary search
algorithm of the Standard Template Library in \texttt{C++}.
In the first step we find
the integer coordinates of the grid points
in the basis $\{(1,0),(\frac12,\frac{\sqrt 3}
{2})\}$.
The procedure uses simple loops to find the coordinates of
the grid points
inside a large triangle and
then calls itself recursively on three smaller triangles
placed on the three sides of the original triangle, until
the desired level is reached.  To avoid duplication of
grid points the coordinates are collected in a \texttt{set} data
structure.  In the second step, we copy the coordinates
into a \texttt{vector} data structure and use binary searches to
find the indices of the six possible neighbors of each
grid point.  In the last step, we compute the Cartesian
coordinates of the grid points and write them into a file
together with the indices of the neighbors.
\end{section}
\begin{section}{Numerical Results}

In this section we present our experimental results.
Our best approximations $\lambda_k^{\rm R}$ for the eigenvalues are obtained by performing
Richardson extrapolation. Specifically, we find the $y$-intercepts of the Lagrange polynomials fitting
the points $\{(h_{\rm NSS}(\ell),\lambda_k(\ell))\}_{\ell=4}^6$. We also compute Richardson extrapolations
$\mu_k^{\rm R}$ using the data published in \cite{lnrg}.
In Table~\ref{Richard_D1} we list the level 6 and Richardson approximations of the first ten
and the $100^{\rm th}$ eigenvalues.

\begin{table}[ht]
\begin{center}
\begin{tabular}{|c||c|c||c|c|}
\hline
 &NSS&NSS&LNR&LNR\\
$k$ & \vphantom{\rule[-.2cm]{0cm}{0cm}}$\lambda_k(6)$ & $\lambda_k^{\rm R}$ & $\mu_k(6)$ & $\mu_k^{\rm R}$\\
\hline\hline
1  &39.353  &39.349& 39.390&39.352  \\
2  &97.446  &97.438& 97.537&97.438  \\
3  &97.446  &97.438& 97.537&97.438  \\
4  &165.417 &165.409&165.622&165.478 \\
5  &165.417 &165.409&165.622&165.478\\
6  &190.381 &190.373&190.571&190.365\\
7  &208.622 &208.617&208.837&208.59 \\
8  &272.415 &272.413&272.755&272.480\\
9  &272.415 &272.413&272.755&272.480\\
10 &312.348 &312.358&312.645&312.351\\
100&2322.129&2324.925&&\\
\hline
\end{tabular}
\vspace*{.2in}
\caption{The first ten and the $100^{\rm th}$ eigenvalues to the Dirichlet problem.
NSS denotes our new grid, while LNR denotes the grid in \cite{lnrg}.
Provided are the level $\ell=6$ approximations,
where the NSS scheme uses $N_{\rm NSS}(6) = 105469$ grid points and
the LNR method uses $N_{\rm LNR}(6) = 417781$ interior grid points.
The symbols $\lambda_k^{\rm R}$ and $\mu_k^{\rm R}$ denote the
the Richardson extrapolation values for $\lambda_k$ using levels $\ell\in\{4,5,6\}$.
Blank entries correspond to no data available for comparison.}
\label{Richard_D1}
\end{center}
\end{table}

We computed the relative differences $\left(\lambda_k(6)-\lambda_k^{\rm R}\right)/\lambda_k^{\rm R}$
and $\left(\mu_k(6)-\mu_k^{\rm R}\right)/\mu_k^{\rm R}$ for $k\in\{1,\ldots,10\}$.
The relative differences of the $\lambda$ values ranged from $10^{-5}$ to $10^{-4}$, while
the relative differences of the $\mu$ values are all on the order of $10^{-3}$. The absolute
differences between the Richardson extrapolations $\lambda_k^{\rm  R}$ and $\mu_k^{\rm R}$
ranged from $10^{-4}$ to $7\cdot 10^{-2}$.
Note that even for $k=100$ we have
$\left(\lambda_{100}(6)-\lambda_{100}^{\rm R}\right)/\lambda_{100}^{\rm R}\approx-1.203\cdot 10^{-3}$.

In Figures~\ref{Rplot1} and \ref{Rplot2} we visually compare the Richardson extrapolations for $\lambda_1$
and $\lambda_{10}$. We can see that although the extrapolated values are nearly identical, our approximations
are much closer to the common extrapolated values using a lot fewer grid points. This is a key issue for
us since in \cite{nss2} we will require accurate eigenvectors and eigenvalues using as few grid points
as possible.
We cannot use the extrapolated eigenvalues,
since although they are more accurate approximations of eigenvalues of $-\Delta$,
they are not eigenvalues of $L$ corresponding to eigenvectors of $L$ at any given level.

\begin{figure}[ht] 
\begin{center}
\scalebox{.55}{\includegraphics{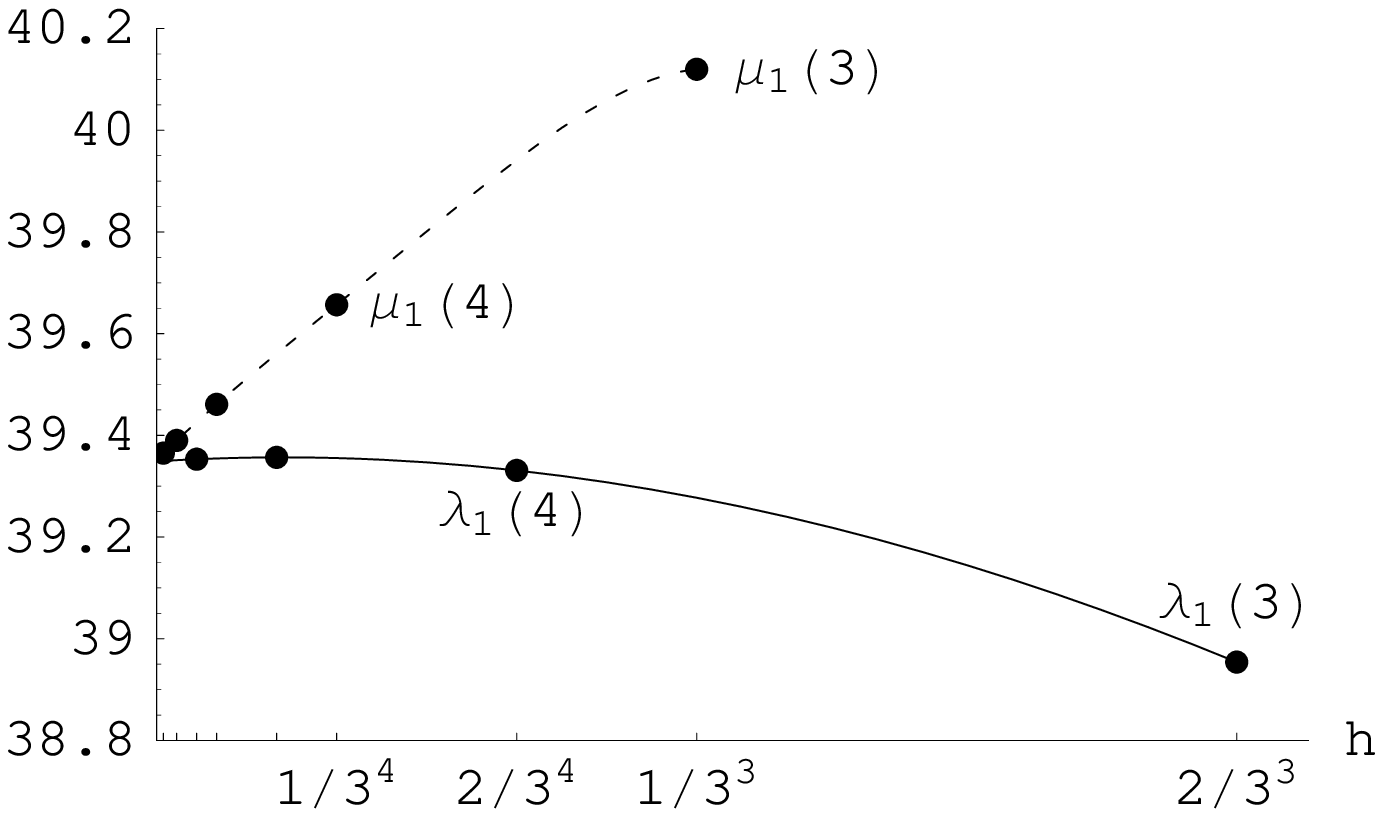}}
\hfill
\scalebox{.55}{\includegraphics{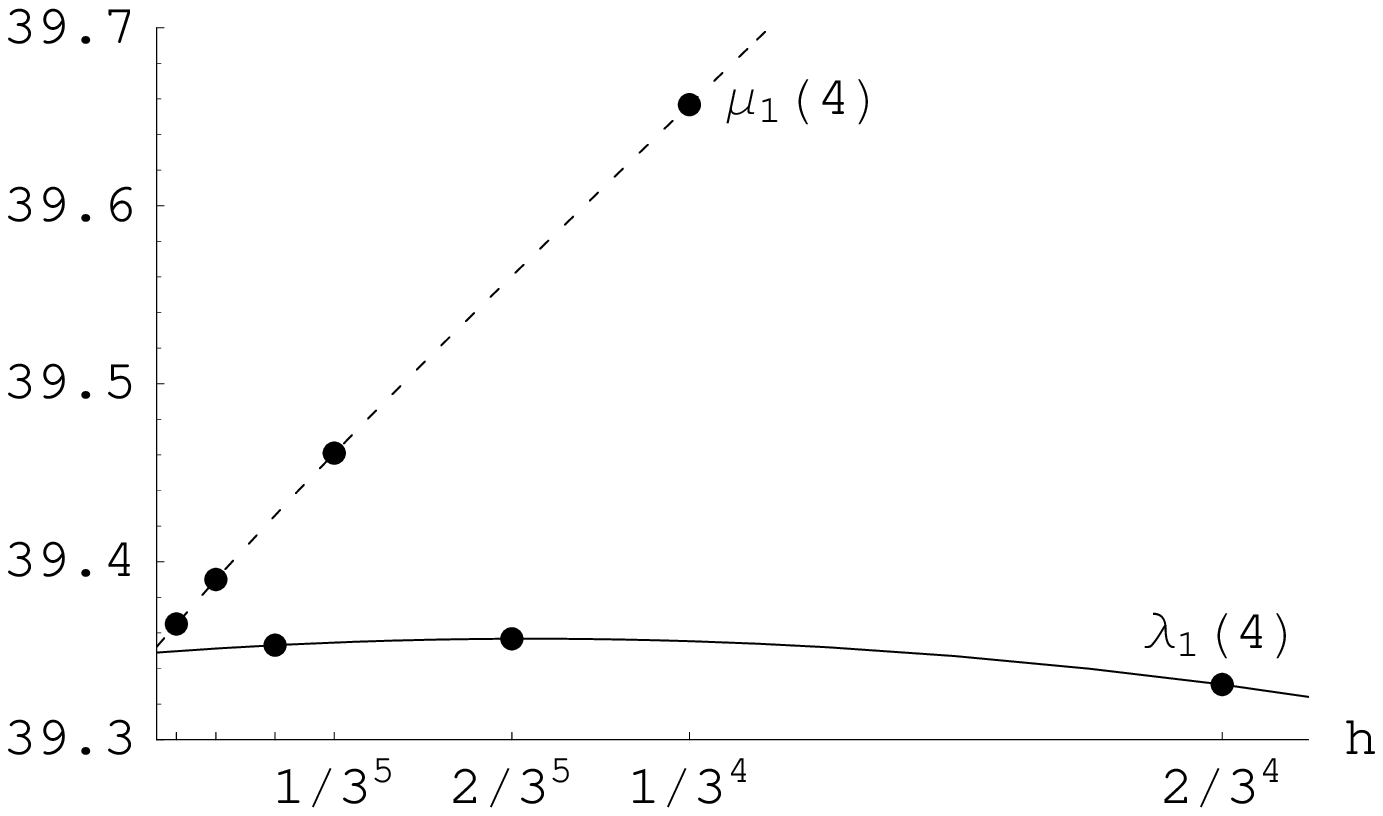}}
\caption{
Two views of the Richardson extrapolations for $\lambda_1$.
The solid line is the graph of the Lagrange polynomial fitting our data for $\ell\in\{3,4,5,6\}$.
The dashed line fits the $\ell\in\{3,4,5,6\}$ data of \cite{lnrg} together with the unpublished
level $\ell = 7$ eigenvalue approximation obtained via
private communication from Robert Renka.
}
\label{Rplot1}
\end{center}
\end{figure}

\begin{figure}[ht]  
\begin{center}\
\scalebox{.55}{\includegraphics{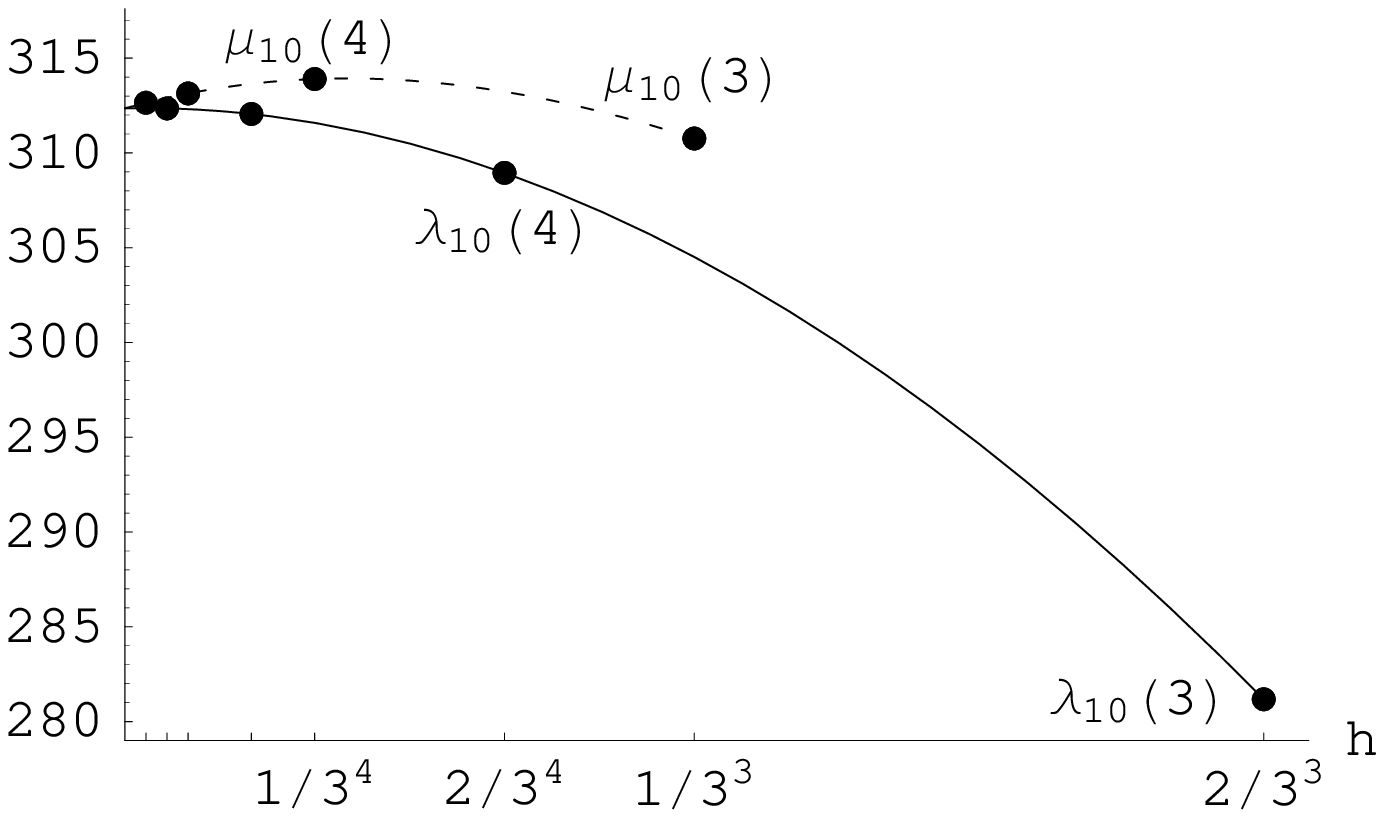}}
\hfill
\scalebox{.55}{\includegraphics{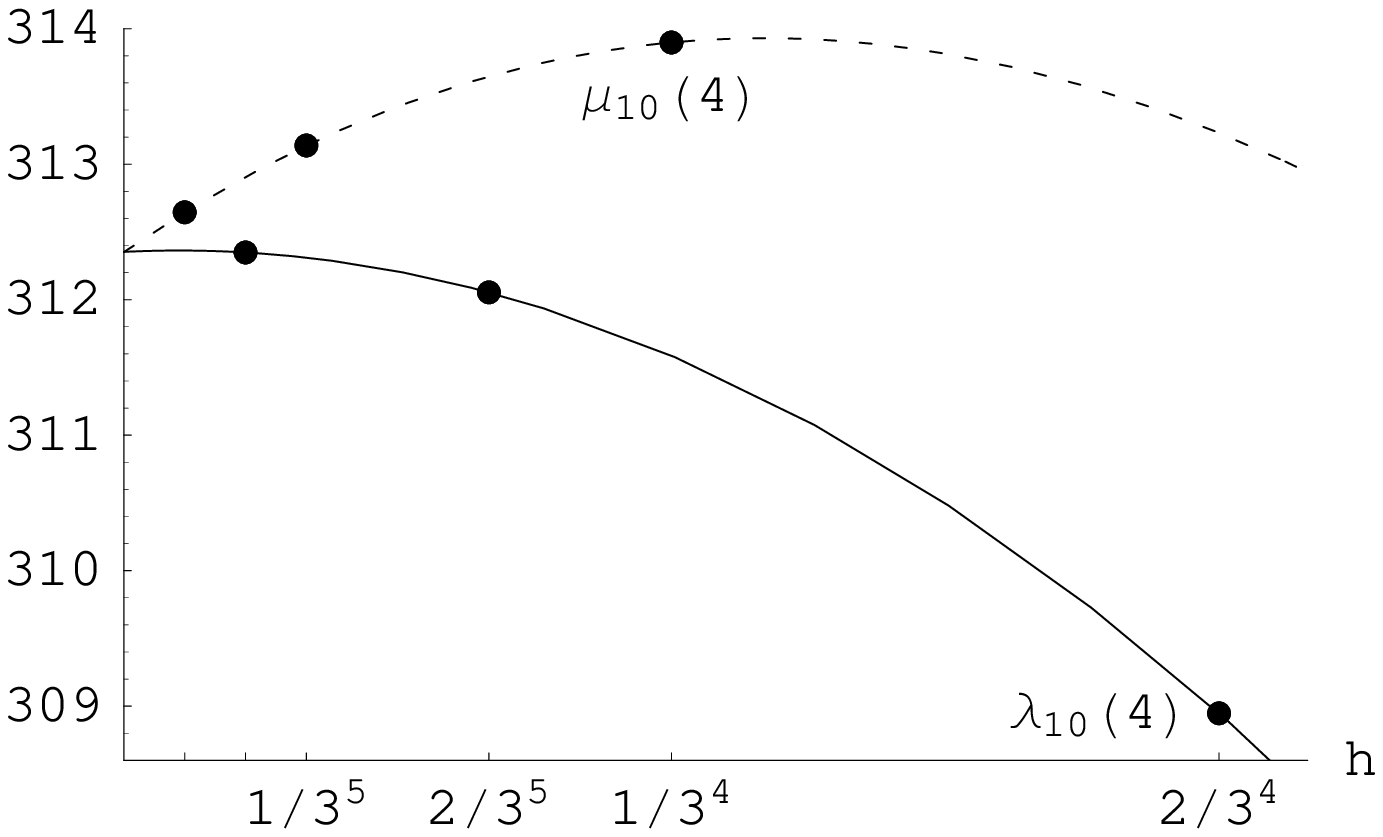}}
\caption{
Two views of the Richardson Extrapolations for $\lambda_{10}$.
As in Figure \ref{Rplot1}, the solid lines correspond to our data and
the dashed lines to the data in \cite{lnrg}.
}
\label{Rplot2}
\end{center}
\end{figure}

Our results for the Neumann boundary conditions are shown in Table~\ref{Richard_N}.
Based on private communication we know that our approximate eigenvalues are very close
to the unpublished numbers obtained by Lapidus et al. A careful comparison of the two
grid schemes is not possible at this time, since we do not have all of their data.
We found that the Lagrange polynomial of our Neumann data approaches $h = 0$ linearly,
similar to the curves of \cite{lnrg} in Figures \ref{Rplot1} and \ref{Rplot2}.
This led us to consider an alternate
scheme for enforcing boundary conditions. As noted in Figure~\ref{grid}, $u$ is multi-valued
at certain ghost points. We tried using a single average value at these ghost points, but
the slope of the Lagrange polynomial at 0 was larger, so that the eigenvalues at a given level were farther from
the extrapolated value.
It is an area for future research to understand why the ghost points
work so well.
All we can now assert is that our method clearly out-performs that found in \cite{lnrg} for the zero-Dirichlet
eigenvalue problem on the snowflake region.
The ghost points can be used in general regions, and it would be interesting
to determine the optimal method for enforcing
boundary conditions on general regions.

\begin{table}[ht]
\label{Richard_N}
\begin{center}
\begin{tabular}{|c|c|c|c|}
\hline
\vphantom{\rule[-.2cm]{0cm}{.6cm}}
$k$&$\lambda_k(6)$ & $\lambda_k^{\rm R}$  &
$(\lambda_k(6) - \lambda_k^{\rm R})/\lambda_k^{\rm R}$ \\
\hline
  1 &    0.0000 &   0.0000    & NA      \\
  2 &   11.9105 &  11.8424    & 0.0057  \\
  3 &   11.9105 &  11.8424    & 0.0057  \\
  4 &   23.1770 &  23.0466    & 0.0057  \\
  5 &   23.1770 &  23.0466    & 0.0057  \\
  6 &   27.5770 &  27.4261    & 0.0055  \\
  7 &   52.4164 &  52.2105    & 0.0039  \\
  8 &   85.8449 &  85.5521    & 0.0034  \\
  9 &   85.8449 &  85.5521    & 0.0034  \\
 10 &  112.7801 &  112.0200   & 0.0068  \\
100 & 1295.4431 & 1271.1900   & 0.0191  \\
\hline
\end{tabular}
\vspace*{.2in}
\caption{
The first 10 and $100^{\rm th}$ eigenvalues for the Neumann problem.
We have included the level $\ell=6$ approximations, the Richardson extrapolations
using $\ell\in\{4,5,6\}$, and the relative differences of the two.
All results use our new grid. The Neumann eigenvalues obtained by
Lapidus et al. have not been published.
}
\end{center}
\end{table}

We produced contour plots of the eigenfunctions.  An example is shown in
Figure~\ref{sixth}.  These contour plots were produced by a Mathematica notebook
that reads in the $u$ vector and outputs a postscript file.  The level of the
grid approximation is computed from the length of the $u$ vector.  All of the
contour plots shown in this paper use $\ell = 5$ data for which the $u$ vector has
length $N_{\rm NSS}(5) = 11605$.
\begin{figure}[ht]   
\begin{center}\
\scalebox{.7}{\includegraphics{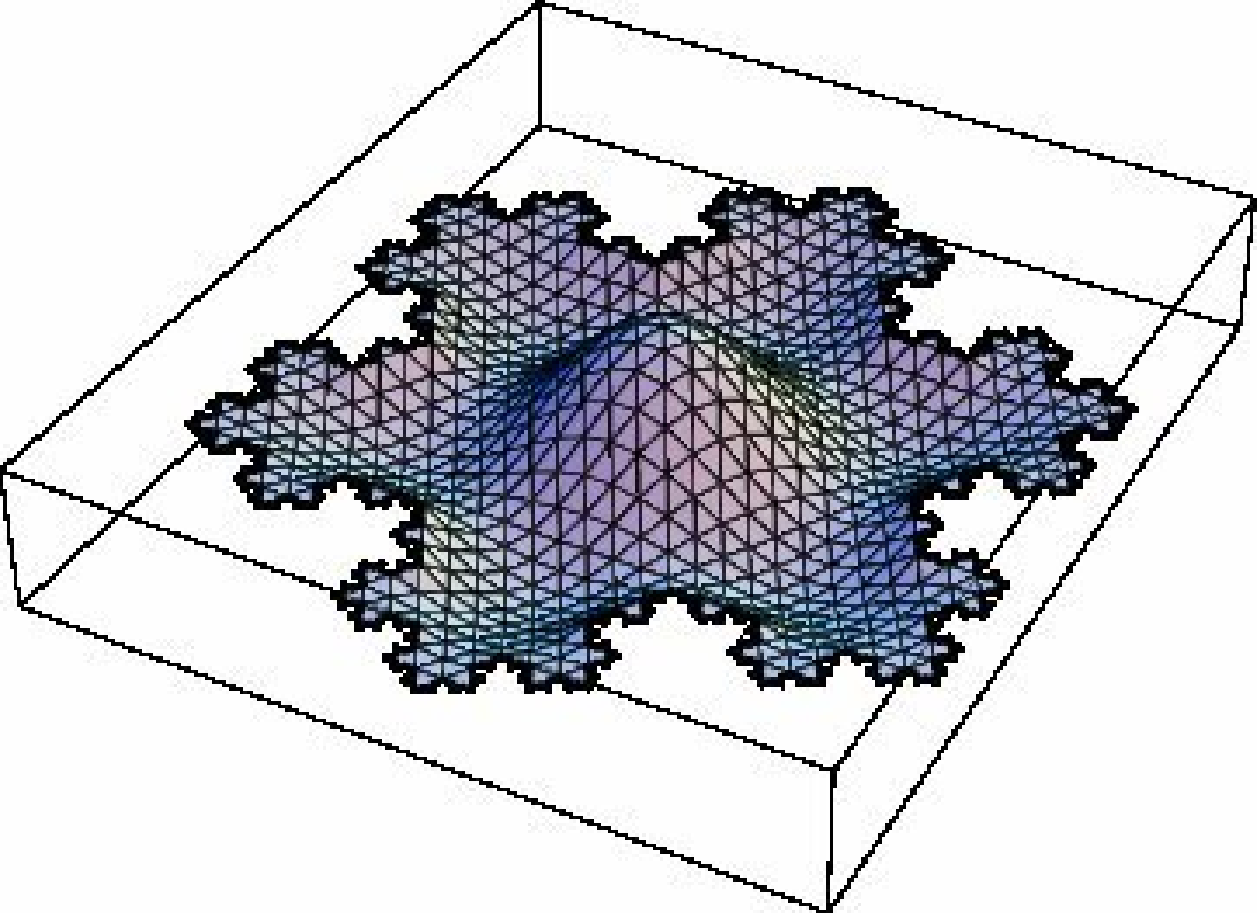}}
\hfill
\scalebox{.4}{\includegraphics{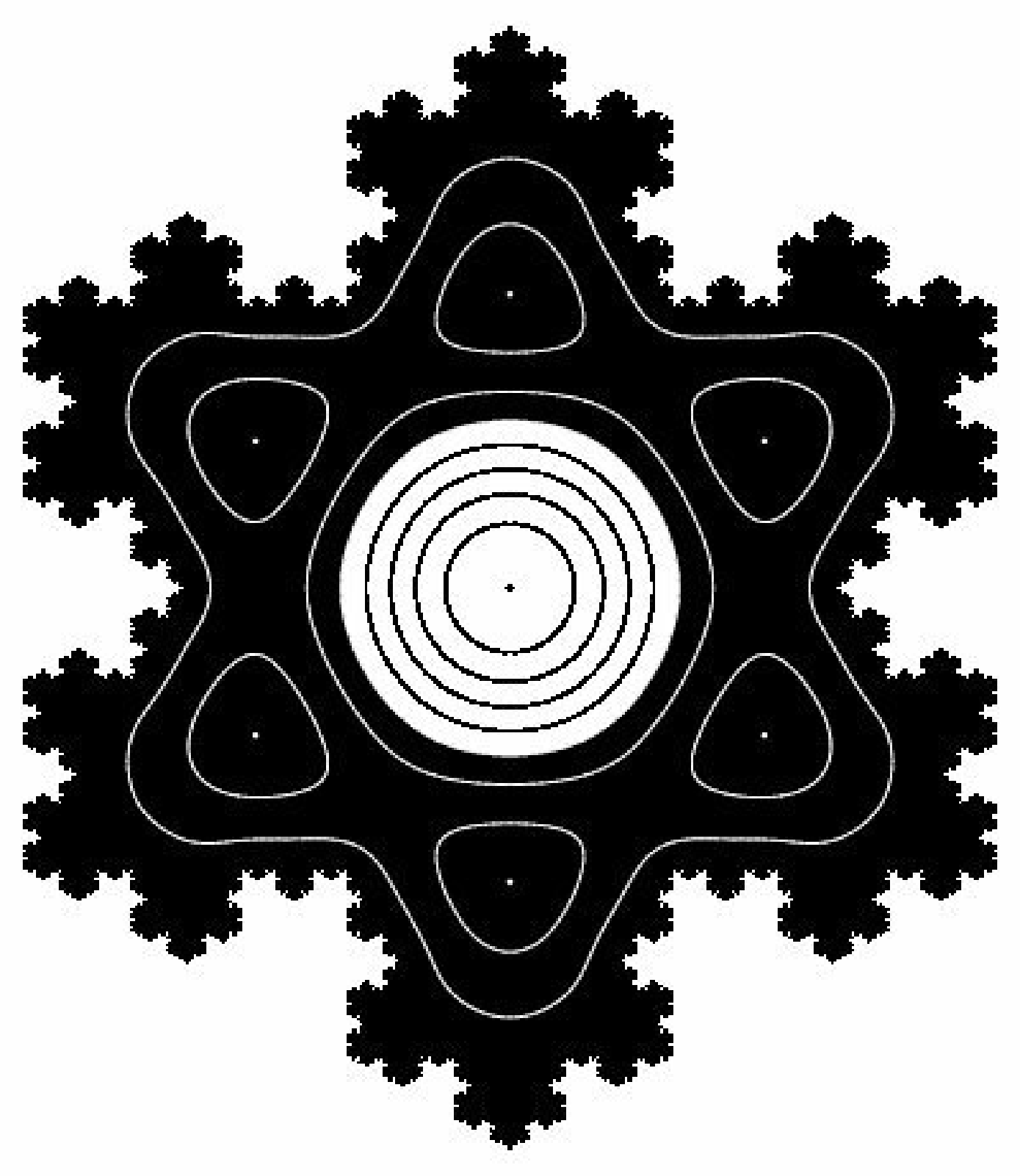}}
\caption{ The graph, and a contour plot, of the sixth
eigenfunction, $\psi_6$, with Dirichlet boundary conditions.
The graph uses 469 grid points to triangulate the snowflake region.
The contour plot shows our level $\ell = 5$ data with the new grid,
as described in the text.
The white and black regions in the contour plot represent positive and
negative values of $\psi_6$.  The contours are equally spaced, and the
dots represent local extrema of $\psi_6$.
\label{sixth}
}
\end{center}
\end{figure}

The local extrema of $u$ are calculated in two steps.  First the extreme values of $u_i$
are calculated.  Then, a quadratic fit to this data point and its six neighbors is performed.
A dot is then drawn at the extremum of the quadratic function.  This extra effort, compared to
drawing a dot at the grid point, has a noticeable effect even at level 5.
After the extrema are found, the $u$ values of the contours are computed using a heuristic
that gives fewer levels as the number of extrema increases.

The black regions are then drawn by subdividing the snowflake region into the triangles
defined by the grid points.  If $u < 0$ at all three vertices of a triangle then the triangle
is filled with black.  If $u < 0$ at some vertices of the triangle and $u > 0$ at others,
then a linear interpolation is used to estimate the region where $u<0$ within the triangle.
The contours are also produced using linear interpolation within the triangles:  If the value
of $u$ on the vertices spans a contour value, then a short line segment inside the triangle
is drawn based on the linear fit.

Several details of the implementation of the contour plotting have been left out.  For example
the region outside of the triangulation of grid points, but inside the snowflake boundary,
is shaded by a different technique.

\end{section}
\begin{section}{Symmetry and the Canonical Basis.}

Some of the eigenvalues of the Laplacian on the snowflake, (\ref{lpde}), have multiplicity one,
and some have multiplicity two.  In this section we quote well-known results
in group representation theory to explain the observed multiplicity.
We also describe a \emph{canonical} way to
choose two eigenvectors to span the two-dimensional eigenspaces.
Details of group representation theory can be found in \cite{tinkam}, \cite{sternberg}
and \cite{scott}.

Assume that $G$ is a finite group. A {\it linear representation\/} of $G$ is a
homomorphism $\alpha:G\to GL(U)$
where $GL(U)$ is the group of invertible
linear operators on the vector space $U = \R^N$ or $\C^N$.
The vector space $U$ is called the {\it representation space\/} of the linear
representation.
If $B$ is a basis for $U$ and $T\in GL(U)$ then we write
$[T]_B$ for the matrix of $T$ in the basis $B$ or simply $[T]$
if $B$ is the standard basis. We call the map $g\mapsto [\alpha (g)]$
a \emph{matrix representation}.
Two representations $\alpha ,\beta :G\to H$ are
{\it equivalent\/}, and we write $\alpha \sim \beta$, if there is an $h\in H$ such that
$\beta (g)=h^{-1}\alpha (g)h$ for all $g\in G$.

Let $\alpha :G\to GL(U)$ be a linear representation.  If $g\in G$ then
$\alpha (g): U \to U$ is a linear operator; for convenience we sometimes use
the notation $\alpha_g=\alpha (g)$.
The linear representation $\al$ induces a {\em group action}
$G \times U \to U$.  We often write $g \cdot u$ in place
of $\al_g (u)$ when the representation $\al$ is understood.

A subspace $W$ of $U$ is called an
{\em invariant subspace} of $\alpha$ if $\alpha_g(W)\subseteq W$ for all $
g\in G$.  The
representation $\alpha$ is called {\em irreducible} if $\alpha$ has no proper
invariant subspaces.
The property of complete reducibility, also known as
Maschke's theorem, says that there are $\alpha$-invariant
subspaces $U_1,\ldots , U_k$ such that $U=U_1\oplus\cdots\oplus U_{k}$ and
$\gamma^{(n)} := \alpha |_{U_n}$ is irreducible for each $n \in \{1, \ldots, k \}$.
If $B_n$ is a basis for $U_n$ and
$B=\cup_n B_n$ then the matrix of $\alpha_g$ in the basis $B$ is block
diagonal for all $g\in G$, that is
$$[\alpha_g]_B=\bigoplus_{n=1}^k[\gamma^{(n)}(g)]_{B_n}.$$

Let $\Gamma^{(i)}$, $i \in\{ 1, \ldots , q\}$
be an element from each of the $q$ equivalence classes of
irreducible representations of $G$.  Suppose we have a complete decomposition
of the representation $\alpha$ into irreducible representations $\gamma^{(n)}$.
For each $i$ there is an $\alpha$-invariant subspace
$$
V^{(i)} = \bigoplus \{ U_n \mid \gamma^{(n)} \sim \Gamma^{(i)} \} .
$$
Whereas there is great freedom in choosing the elements $U_n$ in
$U=U_1\oplus\cdots\oplus U_{k}$,
the decomposition $U=V^{(1)}\oplus\cdots\oplus V^{(q)}$ is unique up to ordering.

The {\em characters} of the representation $\Gamma^{(i)}$ are
$\chi^{(i)}(g)= \mbox{tr} [ \Gamma^{(i)} (g) ]$.
These characters are used in projection operators
\begin{equation}
\label{projection}
P^{(i)} ={\frac{d_i}{|G|}}\sum_{g\in G} \chi^{(i)}(g)\alpha_g
\end{equation}
onto the invariant subspaces $V^{(i)}= P^{(i)} (U)$
where $d_i $ is the dimension of the $i^{\rm th}$ irreducible representation.
Note that $[\Gamma^{(i)}(g)]$ is a $d_i \times d_i$ matrix.

To proceed we must choose a fixed set of matrix representations $[\Gamma^{(i)}]$ from each equivalence
class of irreducible representations.
We call these {\em canonical} matrices.  The following calculations
are simplified if we make the canonical
matrices as simple as possible.
It is always possible to choose matrices that are
unitary, and
we assume that the canonical matrices are unitary.

There is set of projection
operators described in \cite{tinkam}
\begin{equation}
\label{projection2}
P^{(i)}_j ={\frac{d_i}{|G|}}\sum_{g\in G} [\Gamma^{(i)}(g)]_{j,j} \alpha_g ,
\end{equation}
where the coefficient of $\al_g$ is the $j^{\rm th}$ diagonal element
of the matrix $[\Gamma^{(i)}(g)]$.
Note that $P^{(i)} = \sum_{j=1}^{d_i} P^{(i)}_j$.
The corresponding vector spaces
$
V^{(i)}_j = P^{(i)}_j (U)
$
are not $\al$-invariant, but they are orthogonal, and we shall see that
the following decomposition of the representation space is very useful:
\begin{equation}
\label{fullDecomposition}
U = \bigoplus_{i=1}^q \left ( \bigoplus_{j=1}^{d_i} V^{(i)}_j \right ) .
\end{equation}

We now consider the effect of the symmetry on commuting linear operators.  The
theory is much simpler if we assume that the representation space $U$ is complex.
\begin{schur}
Suppose
$\Gamma_1$ and $\Gamma_2$ are two irreducible representations of $G$ on $\C^{d_1}$ and
$\C^{d_2}$, respectively, and
$S: \C^{d_1} \to \C^{d_2}$ is a linear operator
such that $S \Gamma_1(g) = \Gamma_2(g) S$
for all $g \in G$.
Then $S = 0$ if $\Gamma_1$ and $\Gamma_2$ are not equivalent, and
$S = c I$ for some $c \in \C$  if $\Gamma_1=\Gamma_2$.
\end{schur}

Note that Schur's Lemma does not address the case where $\Gamma_1$ and $\Gamma_2$ are
different, but equivalent, representations.  This case can be addressed by choosing a
basis where $\Gamma_1$ and $\Gamma_2$ are the same, as in the proof of the following
corollary.
An abstract version of this corollary can be found in \cite[Theorem~1.2.8]{sagan}.

\begin{cor}
\label{SchurCor}
Suppose that $U = \C^{N}$ is a representation space for $\al$ that is decomposed as in
(\ref{fullDecomposition}),
and $T: U \to U$ is a linear operator
that commutes with $\al$.  That is, $\al_g T = T \al_g$ for all $g \in G$.
Then each of the spaces $V^{(i)}_j$ is $T$-invariant, and
the operators $T^{(i)}_j := T|_{V^{(i)}_j}$ decompose $T$ as
\begin{equation}
\label{TDecomposition}
T = \bigoplus_{i=1}^q \left ( \bigoplus_{j=1}^{d_i} T^{(i)}_j \right ) .
\end{equation}
Furthermore, $T^{(i)}_{j}$ is similar to $T^{(i)}_{j'}$.
\end{cor}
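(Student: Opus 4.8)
\medskip
\noindent\textbf{Proof proposal.}
The plan is to derive both assertions from the single observation that, since $\alpha_g T = T \alpha_g$ for every $g$, the operator $T$ commutes with every linear combination of the $\alpha_g$---in particular with the projections $P^{(i)}_j$ of (\ref{projection2}) and with the ``off-diagonal'' transfer operators built from the canonical unitary matrices $[\Gamma^{(i)}]$. First I would establish $T$-invariance of each $V^{(i)}_j$. Because $P^{(i)}_j = \frac{d_i}{|G|}\sum_{g} [\Gamma^{(i)}(g)]_{j,j}\,\alpha_g$ is a linear combination of the $\alpha_g$, it commutes with $T$; and since $P^{(i)}_j$ is the idempotent projection of $U$ onto $V^{(i)}_j$ attached to the decomposition (\ref{fullDecomposition}), any $v \in V^{(i)}_j$ obeys $v = P^{(i)}_j v$, so $Tv = T P^{(i)}_j v = P^{(i)}_j(Tv) \in V^{(i)}_j$. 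Restricting $T$ to each summand of (\ref{fullDecomposition}) then gives (\ref{TDecomposition}) with $T^{(i)}_j = T|_{V^{(i)}_j}$.

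For the similarity statement I would introduce the transfer operators
\[
  P^{(i)}_{j,j'} = \frac{d_i}{|G|}\sum_{g \in G} [\Gamma^{(i)}(g)]_{j,j'}\,\alpha_g ,
\]
which generalize (\ref{projection2}), so that $P^{(i)}_{j,j} = P^{(i)}_j$. Expanding $P^{(i)}_{j,j'} P^{(i')}_{k,l}$ with $\alpha_g \alpha_h = \alpha_{gh}$ and invoking the great orthogonality relations for the canonical unitary matrices collapses the double sum to the multiplication rule $P^{(i)}_{j,j'} P^{(i')}_{k,l} = \delta_{ii'}\,\delta_{j'k}\,P^{(i)}_{j,l}$. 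Consequently $P^{(i)}_{j',j}$ sends $V^{(i)}_j$ into $V^{(i)}_{j'}$, annihilates every other summand of (\ref{fullDecomposition}), and has $P^{(i)}_{j,j'}|_{V^{(i)}_{j'}}$ as a two-sided inverse on these subspaces (the two composites $P^{(i)}_{j,j}$ and $P^{(i)}_{j',j'}$ acting as the identity on $V^{(i)}_j$ and $V^{(i)}_{j'}$ respectively). Hence $S := P^{(i)}_{j',j}|_{V^{(i)}_j}$ is a linear isomorphism $V^{(i)}_j \to V^{(i)}_{j'}$. Being once more a combination of the $\alpha_g$, $S$ commutes with $T$ on all of $U$; restricting this identity to the $T$-invariant space $V^{(i)}_j$ yields $S T^{(i)}_j = T^{(i)}_{j'} S$, i.e. $T^{(i)}_{j'} = S T^{(i)}_j S^{-1}$, so $T^{(i)}_j$ and $T^{(i)}_{j'}$ are similar.

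The one genuinely delicate point is the multiplication rule for the $P^{(i)}_{j,j'}$: one must check that the conventions fixed in (\ref{projection2}) (no complex conjugate on $[\Gamma^{(i)}(g)]_{j,j'}$, canonical matrices taken unitary) are precisely the ones under which the orthogonality relations produce the clean formula, so that the $P^{(i)}_{j,j'}$ behave like ``matrix units''; everything else is formal. The argument also relies tacitly on the complex setting through Schur's Lemma, which is what makes the decomposition (\ref{fullDecomposition}) have the separation properties used above. An alternative that sidesteps the transfer operators is to build a symmetry-adapted basis of each isotypic subspace $V^{(i)} = \bigoplus_{j} V^{(i)}_j$; Schur's Lemma then forces $T|_{V^{(i)}}$ to act, in that basis, as $d_i$ identical blocks $B^{(i)}$---one on each $V^{(i)}_j$---which exhibits the similarity directly.
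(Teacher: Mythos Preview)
Your proposal is correct, and its last sentence in fact sketches the approach the paper actually takes. The paper argues as follows: decompose $U=\bigoplus_n U_n$ into irreducible $\alpha$-invariant pieces, choose in each $U_n$ a basis $\{e_{n,j}\}$ transforming exactly by the canonical matrices $[\Gamma^{(i_n)}(g)]$, and apply Schur's Lemma directly to the blocks $T_{m,n}=P_{U_m}\circ T|_{U_n}$ to obtain $T(e_{n,j})=\sum_m c_{m,n}\,e_{m,j}$ with $c_{m,n}=0$ when $i_m\neq i_n$. One then recognizes $V^{(i)}_j=\mathrm{span}\{e_{n,j}\mid i_n=i\}$, so $T$-invariance is immediate and the similarity follows because $T^{(i)}_j$ and $T^{(i)}_{j'}$ are represented by the \emph{same} matrix $[c_{m,n}]_{i_m=i_n=i}$. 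By contrast, your main argument stays basis-free: invariance comes from $[T,P^{(i)}_j]=0$, and similarity from the matrix-unit identity $P^{(i)}_{j,j'}P^{(i')}_{k,l}=\delta_{ii'}\delta_{j'k}P^{(i)}_{j,l}$ derived via the great orthogonality relations (which is where Schur's Lemma is hiding in your route). Your approach buys a clean one-line proof of invariance and an explicit intertwiner $S=P^{(i)}_{j',j}|_{V^{(i)}_j}$; the paper's approach front-loads the effort into constructing the symmetry-adapted basis and then reads off both conclusions, together with the useful remark that the restricted operators are not merely similar but represented by identical matrices.
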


\begin{proof}
By complete reducibility, $U = \bigoplus_{n=1}^k U_n$, where $\al|_{U_n}$ is equivalent
to the irreducible representation $\Gamma^{(i_n)}$, with
$i_n \in \{1, \ldots, q \}$.  We can write $T$ in terms of $k^2$ blocks
$T_{m,n}=P_{U_m}\circ T|_{U_n}: U_n \to U_m$.
%
For each $n$,
we can choose a basis $U_n = \mbox{span} \{ e_{n,j} \mid j = 1, \ldots , d_{i_n} \}$
such that
$P^{(i)}_j e_{n,j'} = \delta_{i, i_n} \delta_{j, j'} e_{n,j}$,
and $\al_g(e_{n,j}) = \sum_{j'=1}^{d_{i_n}} e_{n,j'} [\Gamma^{(i_n)}(g)]_{j',j}$.
In other words, the basis vectors $e_{n,j} \in U_n$ transform like the standard basis vectors
$e_j \in \R^{d_{i_n}}$ do when multiplied by the matrices $[\Gamma^{(i_n)}(g)]$.
If $i_n = i_m$, then $\alpha |_{U_n} = \alpha |_{U_m}$.
Thus, Schur's Lemma implies
$$
T(e_{n,j}) = \sum_{m=1}^k  e_{m, j} \, c_{m, n}
$$
where $c_{m, n} = 0$ if $i_m \neq i_n$.
So, if $i_m = i_n$ then $T(e_{n,j})$ is a sum over $e_{m,j}$ with the same $j$.
The spaces $V^{(i)}_j$ that we defined in terms of projection operators can be written as
$$
V^{(i)}_j = \mbox{span} \{ e_{n,j} \mid i_n = i \},
$$
and it is now clear that $V^{(i)}_j$ is $T$-invariant.
Finally, $T^{(i)}_{j}$ is similar to $T^{(i')}_{j'}$ if $i = i'$,
since in the basis we have constructed they are represented by the same matrix.
\end{proof}

\begin{rem}
The spectrum of $T$ is the set of eigenvalues of the $k \times k$ matrix $C$, with elements
$c_{m,n}$.
The matrix $C$
separates into $q$ diagonal blocks $C^{(i)}$, one for each irreducible representation class of $G$.
If $\lam$ is an eigenvalue of $C^{(i)}$, then $\lam$ is an eigenvalue of $T$
with multiplicity at least $d_i$.
If the matrix $C$ has any multiple eigenvalues, then we say that $T$ has {\em accidental
degeneracy}.  In this case, a perturbation of $T$ that commutes with $\al$ can be chosen
so that the perturbed $C$ matrix has simple eigenvalues.
Barring accidental degeneracy, every eigenvalue of $T$ corresponds to a unique
irreducible representation $\Gamma^{(i)}$, the eigenvalue has multiplicity $d_i$,
and an orthonormal set of eigenvectors of this eigenvalue can be chosen,
one from each of the $d_i$ subspaces
$V^{(i)}_j$, $j \in \{ 1, 2, \ldots , d_i \}$.
These $d_i$ eigenvectors can be used as a basis for one irreducible space $U_n$.
In this way, we can write
$U = \bigoplus_{n=1}^k U_n$, where each irreducible component $U_n$ is
the eigenspace of an eigenvalue $\lam_n$ of $T$.
\end{rem}

If we require the spectrum
of a linear operator $T: \R^N \to \R^N$ that commutes with a representation on a {\em real} vector space,
we first complexify to the operator $\tilde{T}:\C^N \to \C^N$.
The spectra of $T$ and $\tilde{T}$ are the same, so the consequences of Schur's Lemma
hold {\em provided} we consider representations
$\Gamma^{(i)}$ which are irreducible over the field $\C$.
Sometimes an irreducible representation over $\R$ breaks up into two complex conjugate
irreducible representations over $\C$.  The eigenvalues of $T$
associated with this pair of representations
are complex conjugates.  If $T$ is self-adjoint, and thus has real eigenvalues,
this causes ``accidental degeneracies.''
For the Laplacian operator on the snowflake domain we do not have to consider this
complication since the irreducible representations
we need are the
same over the field of real or complex numbers.

We now apply the general theory to the eigenvalue equation (\ref{lpde}).
The symmetry group of the snowflake region, as well as the set of grid points in $\Om$,
is the dihedral group
$$
\D_6 = \langle \rho, \sigma \mid \rho^6 = \sigma^2 = 1, \rho \sigma = \sigma \rho^5 \rangle .
$$
It is convenient to define $\tau = \rho^3 \sigma$. Note that $\sig \tau = \tau \sig = \rho^3$, and $\rho^3$ commutes
with every element in $\D_6$.
The standard action of $\D_6$ on the plane is
\begin{align}
\begin{split}
\label{actionOnPlane}
\rho \cdot(x,y)  &= \mbox{$
   \left ( \frac{1}{2} x + \frac{\sqrt{3}}{2} y, -\frac{\sqrt{3}}{2} x + \frac{1}{2} y \right )$} \\
\sig \cdot(x,y)  &= (-x, y) \\
\tau \cdot(x,y)  &= (x, -y) .
\end{split}
\end{align}
In this action $\rho$ is a rotation by $60^\circ$, $\sig$ is a reflection across the $y$-axis,
and $\tau$ is a reflection across the $x$-axis.

For a given grid with $N$ points in $\Omega$, the $\D_6$ action on the plane (\ref{actionOnPlane})
induces a group action on the integers $\{1, 2, 3, \ldots , N\}$ defined by $x_{g\cdot i}=g\cdot x_i$.
There is also a natural action on the space of all functions from $\Om$ to $\R$, $U = \R^N$, defined by
$(g \cdot u)(x_i) = u( g^{-1} \cdot x_i)$
for all $u \in U$ and $g \in \D_6$.
With the usual identification $u_i = u(x_i)$,
this action corresponds to a linear representation $\alpha$ of  $\D_6$ on $U = \R^N$ defined by
$ (\alpha_g(u))_i=u_{g^{-1}\cdot i}$.


There are exactly 6 irreducible representations
of $\D_6$ up to equivalence.  Our canonical matrices are listed in Table \ref{irreps}.
\begin{table}
\begin{center}
\begin{tabular}{|c|ccc|}
\hline
$i$ & $[\Gamma^{(i)}(\rho)]$ & $[\Gamma^{(i)}(\sig)]$ & $[\Gamma^{(i)}(\tau)]$\\
\hline
$1$ & 1   &  1  & 1 \\ \hline
$2$ & 1   &  $-1$  & $-1$ \\ \hline
$3$ & $-1$   &  1  & $-1$ \\ \hline
$4$ & $-1$   &  $-1$  & 1 \\ \hline
$5$ & $\left(
\begin{array}{cc}
-1/2 & \sqrt{3}/2 \\
-\sqrt{3}/2 & -1/2
\end{array}
\right)$   &  $\left(
\begin{array}{cc}
1& 0 \\
0 & -1
\end{array}
\right)$  & $\left(
\begin{array}{cc}
1& 0 \\
0 & -1
\end{array}
\right)$ \\ \hline
$6$ & $\left(
\begin{array}{cc}
1/2 & \sqrt{3}/2 \\
-\sqrt{3}/2 & 1/2
\end{array}
\right)$   &  $\left(
\begin{array}{cc}
1& 0 \\
0 & -1
\end{array}
\right)$  & $\left(
\begin{array}{cc}
-1& 0 \\
0 & 1
\end{array}
\right)$ \\ \hline
\end{tabular}
\vspace*{.2in}
\caption{
\label{irreps}
A representative from each of the six equivalence classes of
irreducible matrix representations of $\D_6$.
These canonical
matrices are real, but the representations are nevertheless irreducible over the complex numbers.
The homomorphism condition $\Gamma^{(i)}(g h) = \Gamma^{(i)}(g) \Gamma^{(i)}(h)$ allows
all of the matrices to be computed from $[\Gamma^{(i)}(\rho)]$ and $[\Gamma^{(i)}(\rho)]$.
The last column is included for convenience.
The representation $\Gamma^{(6)}$ corresponds to the standard action of $\D_6$ on the plane (\ref{actionOnPlane}).
}
\end{center}
\end{table}
Since we have chosen a set of real matrices, $P^{(i)}$ and $P^{(i)}_j$ are projection operators
on both $\R^N$ and $\C^N$.  We can write the representation space $U = \R^N$ as
$$
U = V^{(1)} \oplus V^{(2)} \oplus V^{(3)} \oplus V^{(4)} \oplus V^{(5)}_1 \oplus V^{(5)}_2
\oplus V^{(6)}_1 \oplus V^{(6)}_2,
$$
where each of the real vector spaces $V^{(i)}$ and $V^{(i)}_j$ is invariant under any commuting linear operator
on $U$.
There are just six spaces in this decomposition regardless of the number of grid points $N$.
On the other hand, the decomposition $U = \sum_{n=1}^k U_n$ has thousands of irreducible components $U_n$
when $N$ is large.

In the remainder of this section
we use $T$ to refer to the negative Laplacian {\em operator}.
The results of Corollary \ref{SchurCor} hold, since $T$ commutes with the $\D_6$ action on $U$.
We will use the terms eigenvectors and eigenfunctions of $T$ interchangeably, since
the vector $u \in \R^N$ is a function on the $N$ grid points.

The numbering, 1 through 6, of the irreducible representations of $\D_6$ is somewhat arbitrary.
Therefore we give the names $V_{p_x p_y d}$ to the 8 $T$-invariant spaces $V^{(i)}_j$,
where $p_x$ and $p_y$ describe the parity of the functions under
the reflections $\sig$ and $\tau$ respectively, and $d$ is the dimension of the associated irreducible representation.
Some calculations allow us to give simple descriptions these $T$-invariant spaces
without having to appeal to the projection operators:
\begin{align*}
V_{++1} := V^{(1)} &= \{u \in U \mid \rho \cdot u = u, ~ \sig \cdot u = u , ~ \tau \cdot u = u \} \\
V_{--1} := V^{(2)} &= \{u \in U \mid \rho \cdot u = u, ~ \sig \cdot u = -u , ~ \tau \cdot u = -u \} \\
V_{+-1} := V^{(3)} &= \{u \in U \mid \rho \cdot u = -u, ~ \sig \cdot u = u , ~ \tau \cdot u = -u \} \\
\displaybreak[0]
V_{-+1} := V^{(4)} &= \{u \in U \mid \rho \cdot u = -u, ~ \sig \cdot u = -u , ~ \tau \cdot u = u \} \\
V^{(5)} &= \{u \in U \mid \rho^3 \cdot u = u, ~ u + \rho^2 \cdot u + \rho^4 \cdot u = 0 \} \\
\displaybreak[0]
V^{(6)} &= \{u \in U \mid \rho^3 \cdot u = -u, ~ u + \rho^2 \cdot u + \rho^4 \cdot u = 0 \} \\
V_{++2} := V^{(5)}_1 &= \{u \in V^{(5)} \mid \sig \cdot u = u , ~ \tau \cdot u = u \} \\
V_{--2} := V^{(5)}_2 &= \{u \in V^{(5)} \mid \sig \cdot u = -u , ~ \tau \cdot u = -u \} \\
V_{+-2} := V^{(6)}_1 &= \{u \in V^{(6)} \mid \sig \cdot u = u , ~ \tau \cdot u = -u \} \\
V_{-+2} := V^{(6)}_2 &= \{u \in V^{(6)} \mid \sig \cdot u = -u , ~ \tau \cdot u = u \}
\end{align*}

We can use Corollary \ref{SchurCor} in two different ways.  First, we could construct the
numerical approximations to the restricted operators $T^{(i)}_j$ and find the eigenvalues and eigenvectors
of the corresponding matrices.  This has the advantage that the matrices which represent $T^{(i)}_j$
are smaller than those representing $T$.  In section \ref{ghost} we described the simple structure
of the matrices $L$, which represent $T$ in the standard basis.
We have not attempted to construct the more complicated
matrices which represent $T^{(i)}_j$.

We use a second approach. We compute eigenvalues and eigenvectors of the full operator
$T$, and use Corollary \ref{SchurCor} to classify the eigenvalues according to
the corresponding irreducible representation.  An eigenvector $\psi$ of
$T$ will satisfy $P^{(i)} (\psi) = \psi$ for exactly one $i$, barring
accidental degeneracy.  By computing all the projections, we can determine which
irreducible representation is associated with the eigenvector.
If the irreducible representation has dimension greater than one, then we can choose an orthonormal
set of eigenvectors such that each eigenvector lies in a unique $V^{(i)}_j$ using the projections $P^{(i)}_j$.

For our group $G = \D_6$, we do not have to use the full projection operators
$P^{(i)}$ and $P^{(i)}_j$.
We only need the four projection operators
$$
P_{p_x p_y} = \frac{\alpha_{1} + p_x \al_\sigma + p_y \al_\tau + p_x p_y \al_{\rho^3}}{4},
$$
where $p_x, p_y \in \{+, - \}$.
For example $P_{+-} (u) = (u + \sigma \cdot u - \tau \cdot u - \rho^3 \cdot u)/4$.
Note that $P_{p_x p_y}(U) = V_{p_x p_y 1} \oplus V_{p_x p_y 2}$.  If $\psi$ is
an eigenvector with multiplicity 1 and $P_{p_x p_y} \psi \neq 0$, then $P_{p_x p_y} \psi = \psi$
and $\psi \in V_{p_x p_y 1}$.  It is noteworthy that we do not have
to check for rotational symmetry.  Similarly, if $\psi$ is an eigenvector with multiplicity 2
and $P_{p_x p_y} \psi \neq 0$, then $P_{p_x p_y} \psi \in V_{p_x p_y 2}$.

Here is our algorithm to compute the symmetry of the eigenfunctions of $T$, and to
find eigenvectors in the $T$-invariant spaces $V^{(i)}_j$.
First of all, we know from the computed eigenvalues which are simple and which are double.
If an eigenvalue $\lam$ is simple, then the corresponding eigenvector
$\psi$ satisfies $P_{p_x p_y} \psi = \psi$
for exactly one choice of $p_x $ and $p_y$, and $\psi \in V_{p_x p_y 1}$.
If an eigenvalue $\lam$ has multiplicity 2, then each of the 2 orthogonal eigenfunctions
returned by ARPACK is replaced with the largest of the four projections $P_{p_x p_y} \psi$.
Then, the projected eigenfunctions are normalized.
If necessary, the eigenfunctions are reordered so that $\psi \in V_{++2}$ comes
before $\psi \in V_{--2}$, and $\psi \in V_{+-2}$ comes before $\psi \in V_{-+2}$.
This algorithm assumes that there is no accidental degeneracy.  We produced a list of the
eigenvalues and symmetry types up to the $300^{\rm th}$ eigenvalue, confirming that there is
no accidental degeneracy.

\begin{figure}[hbt]
\begin{center}
\begin{tabular}{|c|c|c|c|}
\hline
 & & & \\
    \scalebox{.25}{\includegraphics{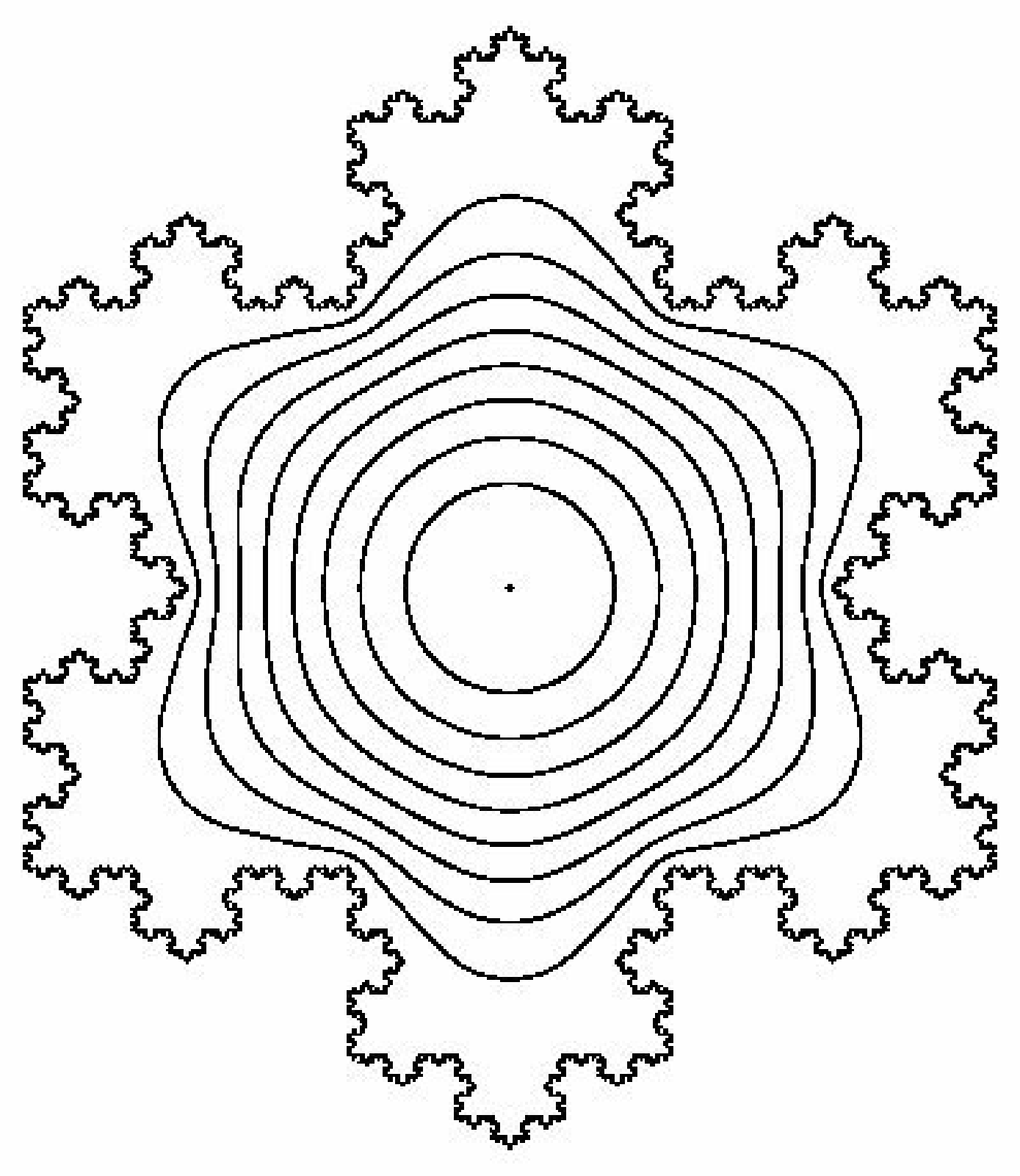}}

&
    \scalebox{.25}{\includegraphics{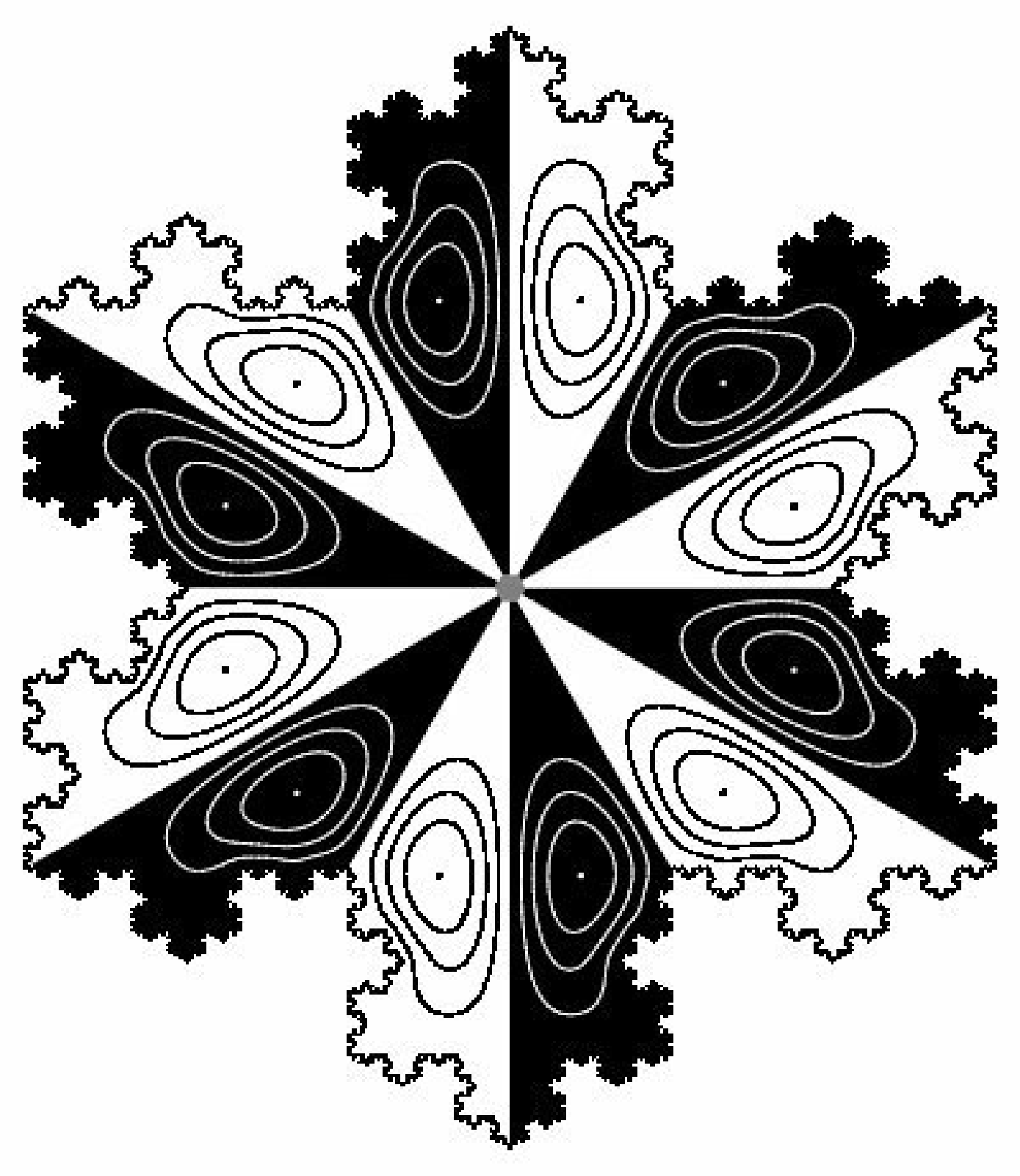}}
&
    \scalebox{.25}{\includegraphics{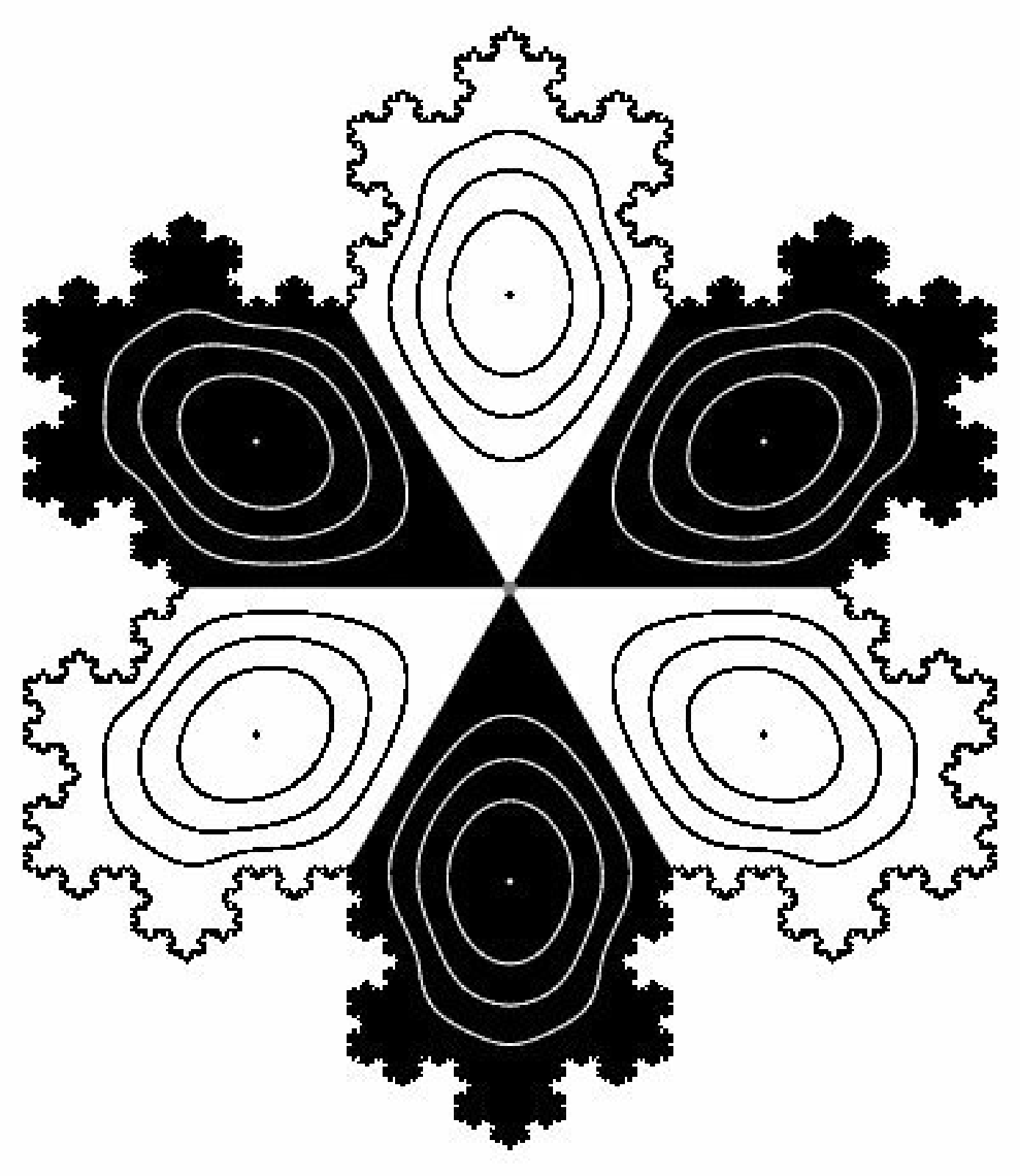}}
&
    \scalebox{.25}{\includegraphics{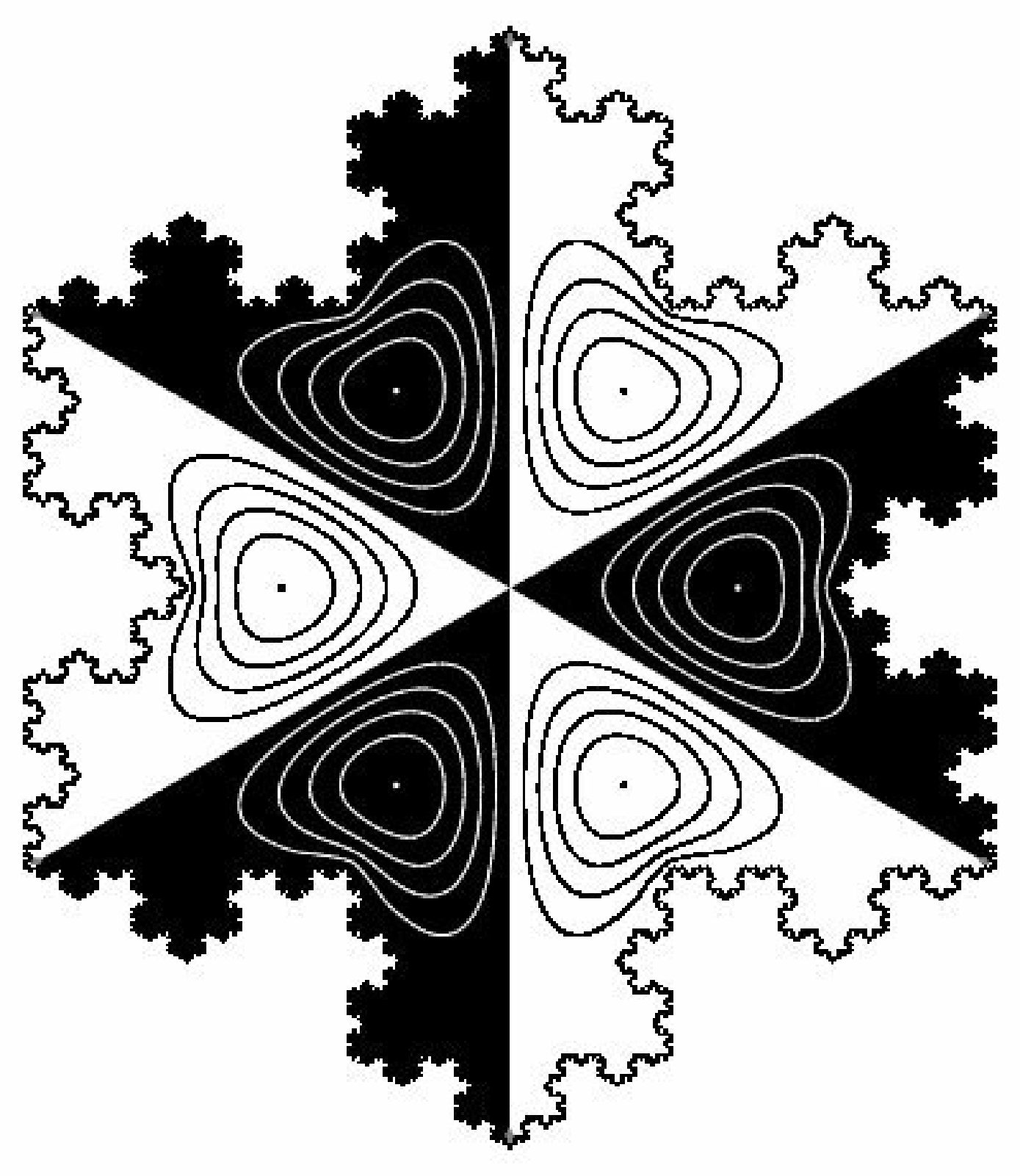}}
    \\

$\psi_1\in V_{++1} := V^{(1)}$ & $\psi_{24}\in V_{--1} := V^{(2)}$ &
$\psi_7\in V_{+-1} := V^{(3)}$ & $\psi_{10}\in V_{-+1} := V^{(4)}$ \\

\hline  & & & \\
    \scalebox{.25}{\includegraphics{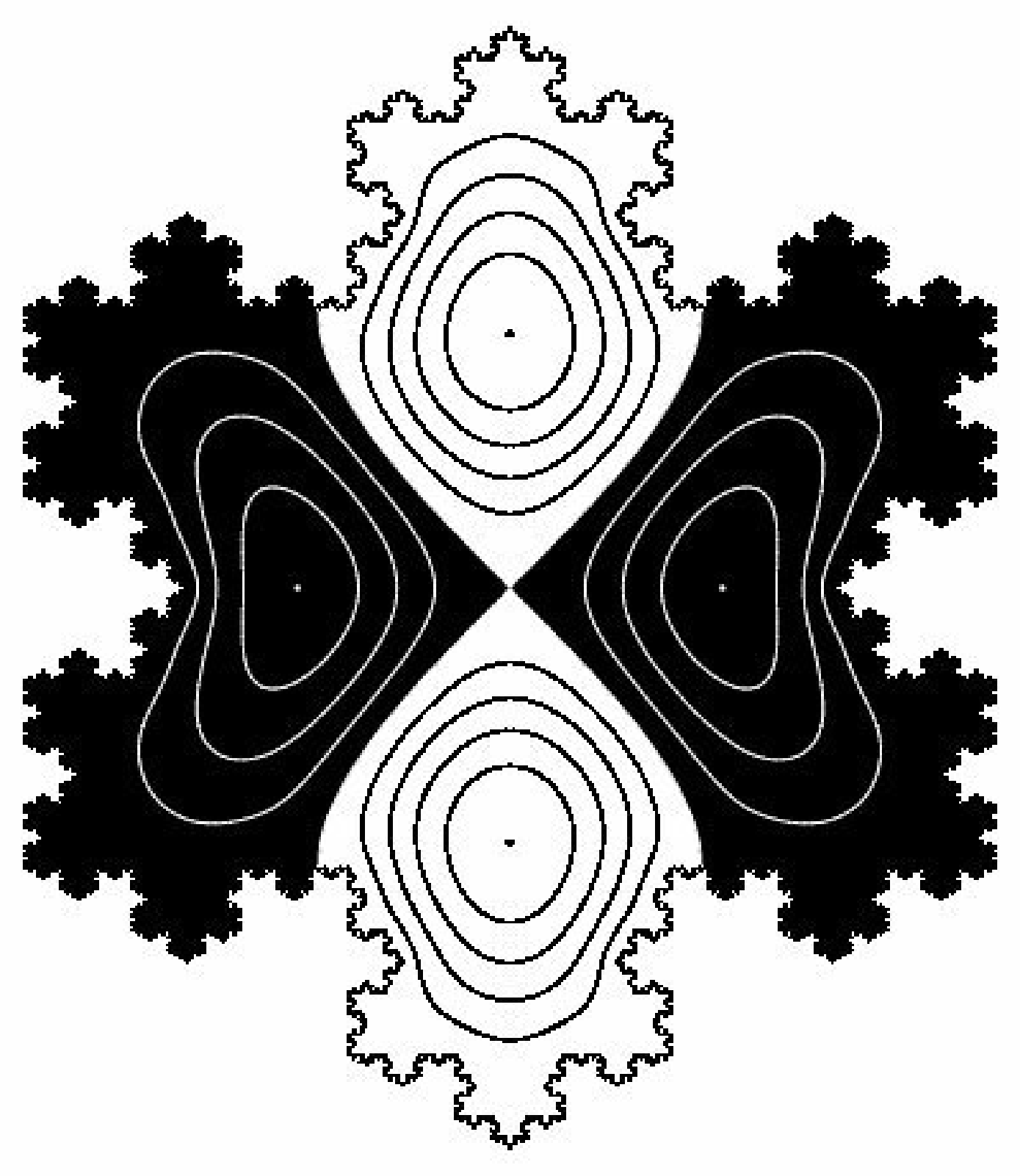}}
&
    \scalebox{.25}{\includegraphics{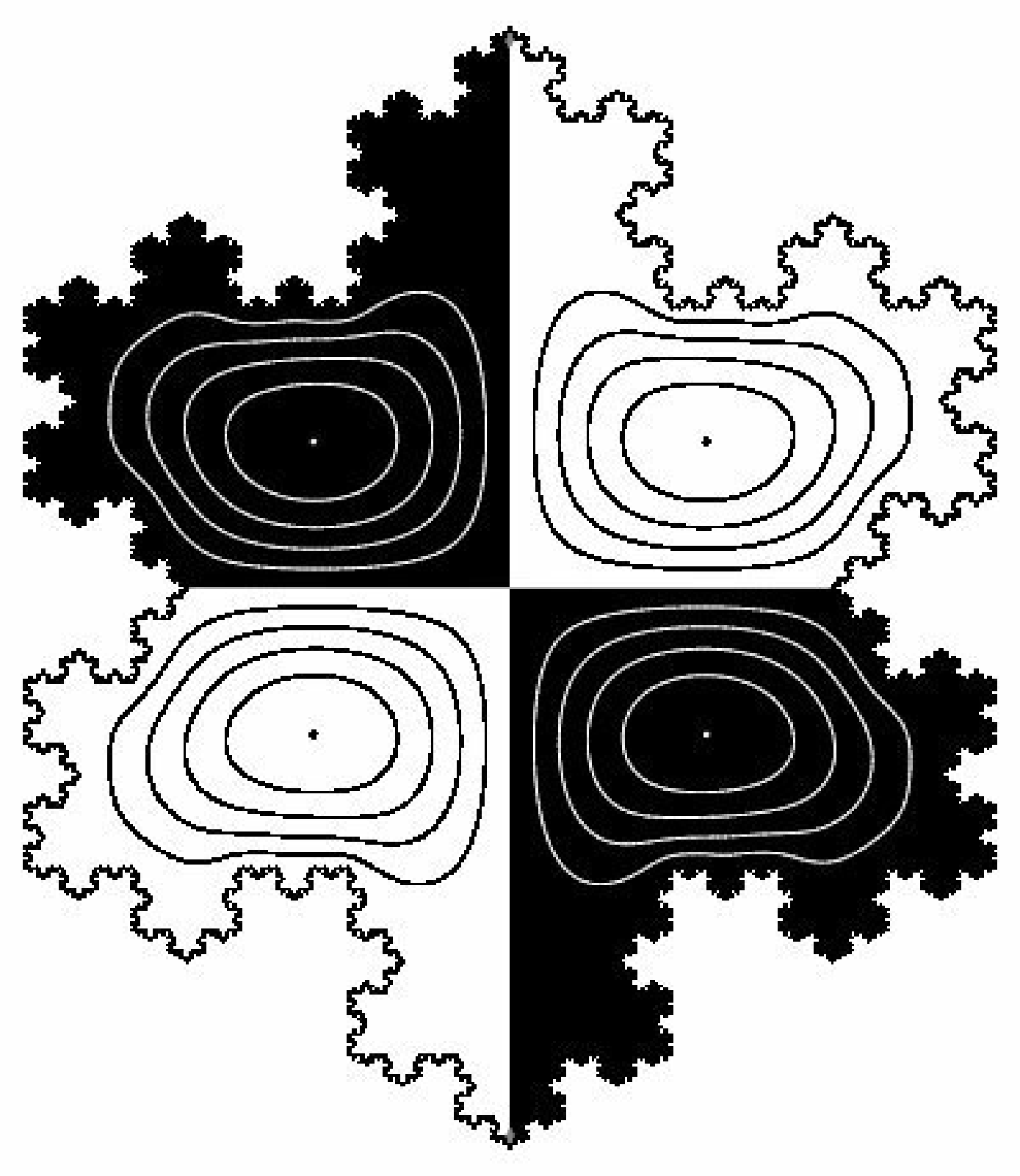}}
&
    \scalebox{.25}{\includegraphics{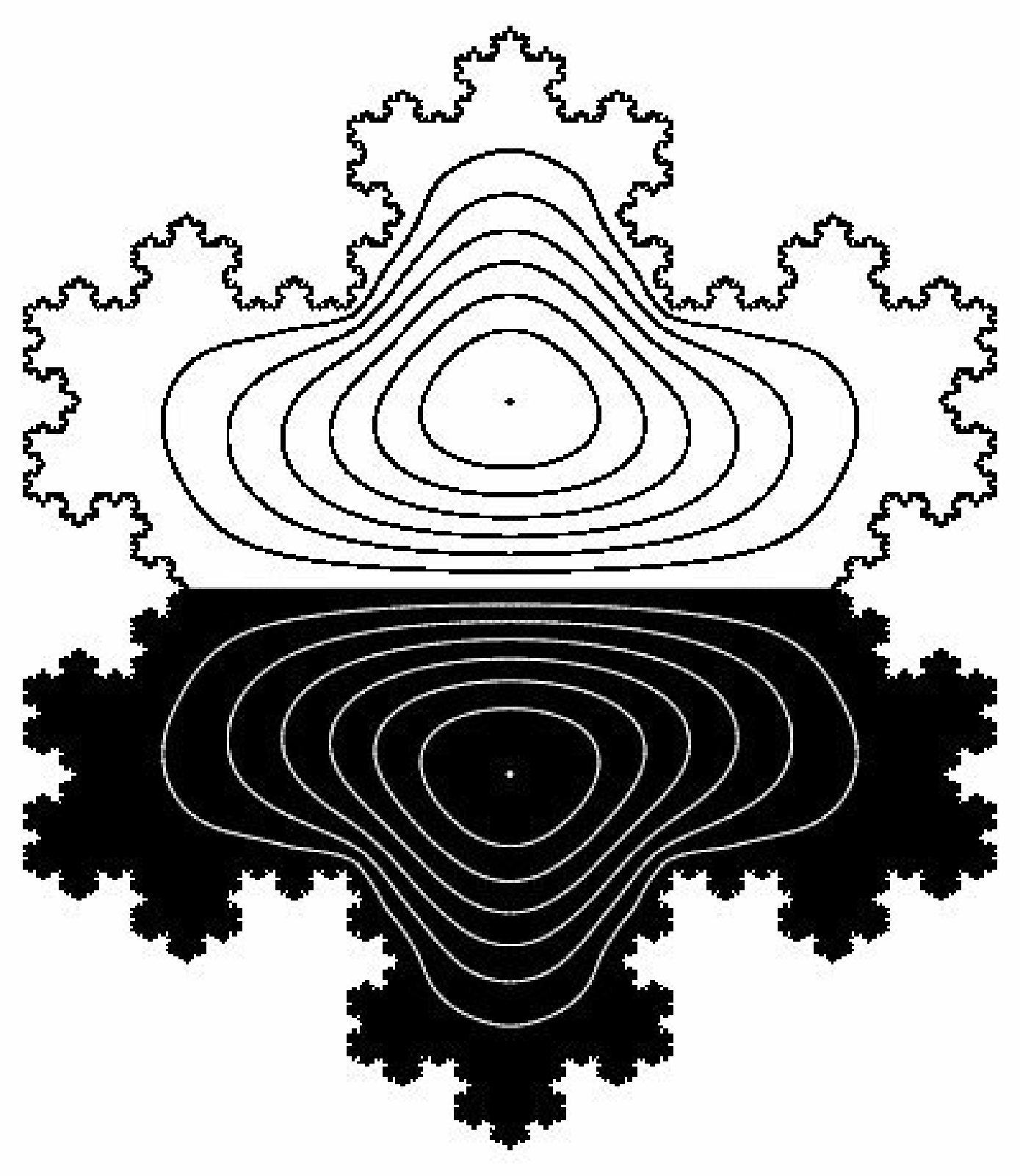}}
&
    \scalebox{.25}{\includegraphics{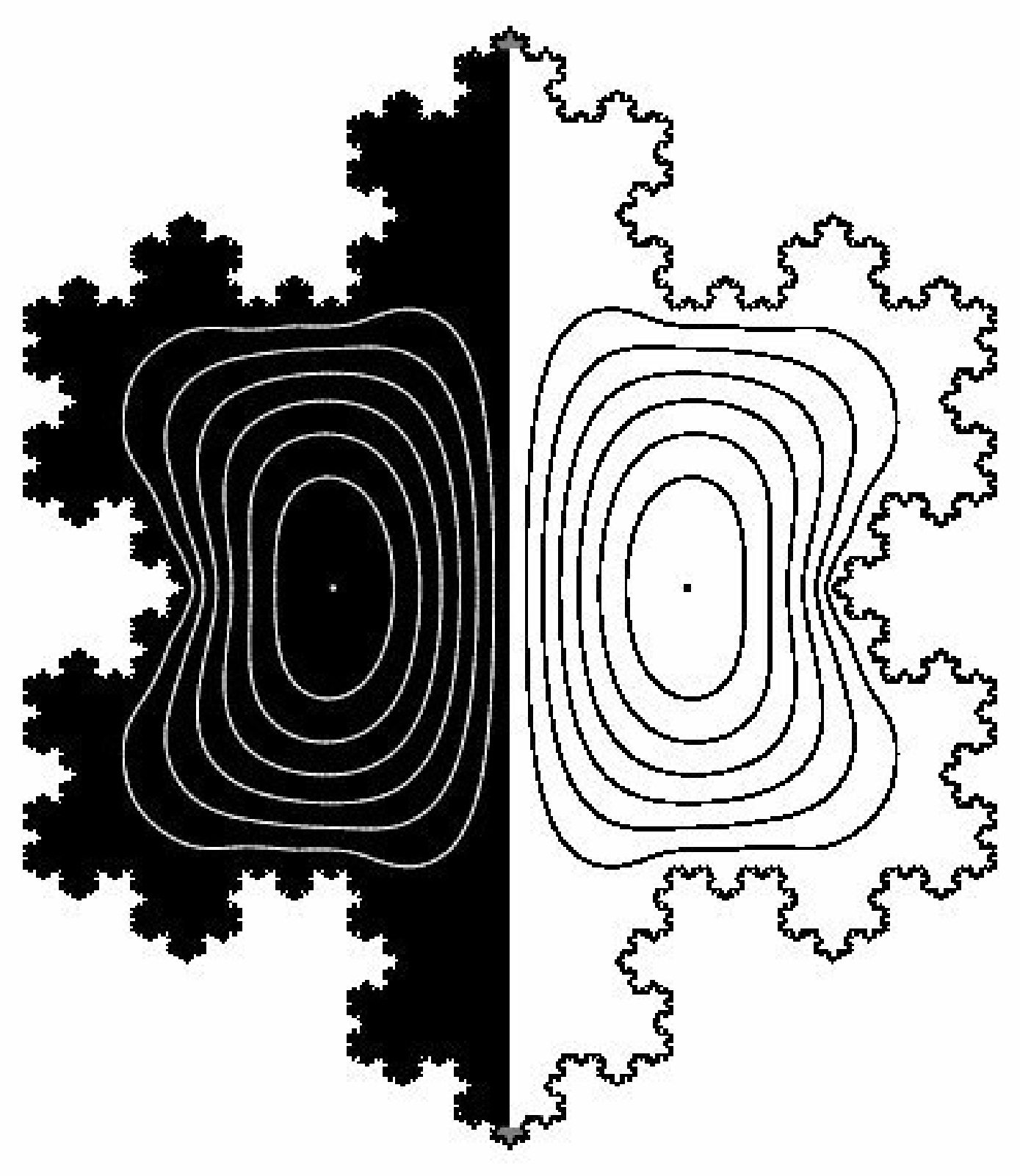}}
    \\
$\psi_4\in V_{++2} := V^{(5)}_1$ & $\psi_{5}\in V_{--2} := V^{(5)}_2$ &
$\psi_2\in V_{+-2} := V^{(6)}_1$ & $\psi_{3}\in V_{-+2} := V^{(6)}_2$
\\ \hline
\end{tabular}
\vspace*{.2in}
\caption{The first occurrences of the 8 symmetry types of
eigenfunctions of the Laplacian with zero Dirichlet boundary conditions.
Each of the eigenfunctions in the first row has a simple eigenvalue.
Those in the second row come in pairs.
For example,
$\lam_4 = \lam_5$, since $\psi_4$ and $\psi_5$ are each in the invariant subspace
$V^{(5)}$ corresponding to the 2-dimensional irreducible representation $\Gamma^{(5)}$.
Similarly,
$\lam_2 = \lam_3$, since $\psi_2$ and $\psi_3$ are each in $V^{(6)}$.
The eigenfunctions shown respect the canonical decompositions $V^{(5)} = V^{(5)}_1 \oplus V^{(5)}_2$
and $V^{(6)} = V^{(6)}_1 \oplus V^{(6)}_2$.
}
\label{eight_sym}
\end{center}
\end{figure}

\begin{figure}[hbt]
\begin{center}
\begin{tabular}{|c|c|c|c|}
\hline
 & & & \\
    \scalebox{.25}{\includegraphics{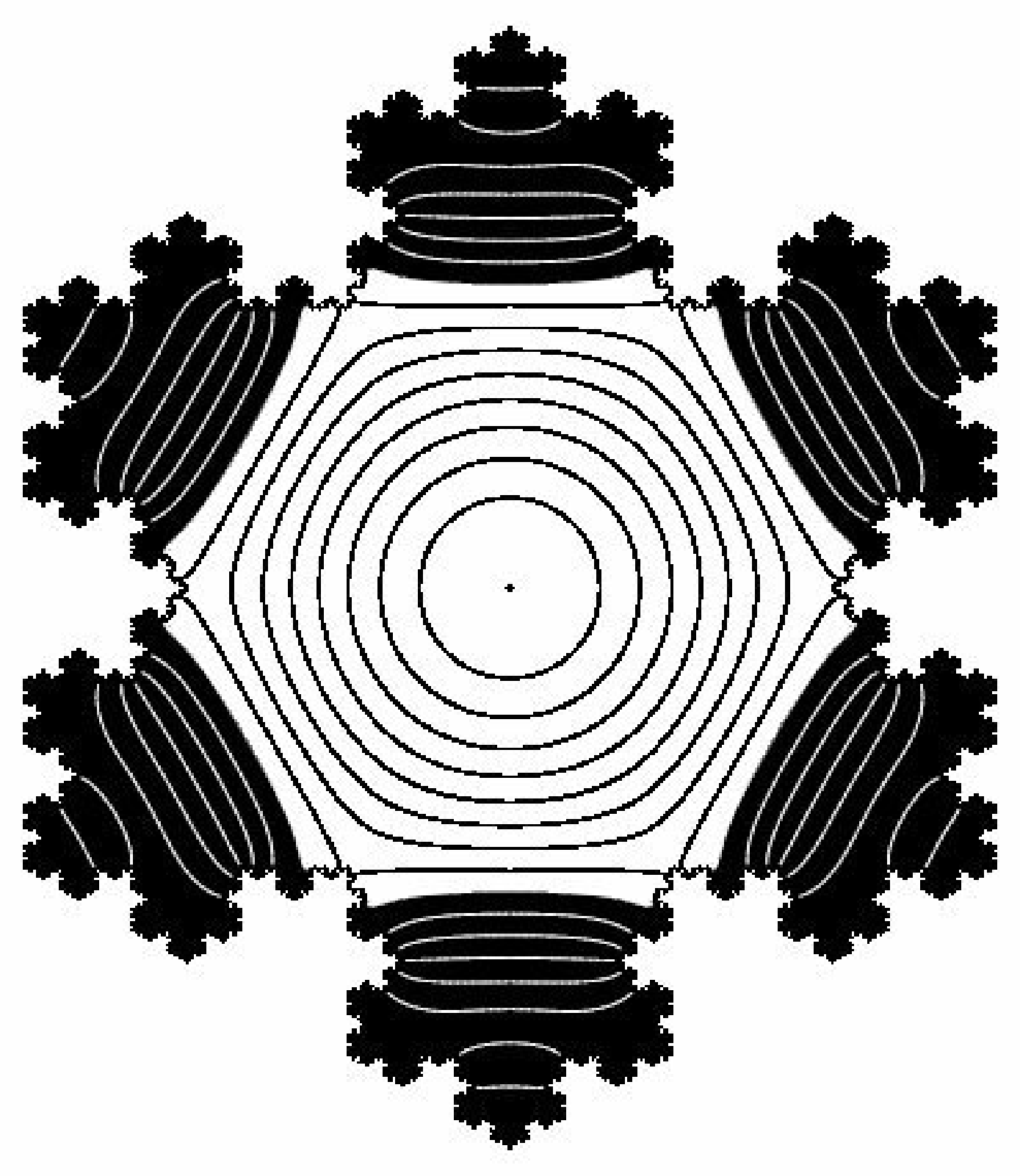}}
&
    \scalebox{.25}{\includegraphics{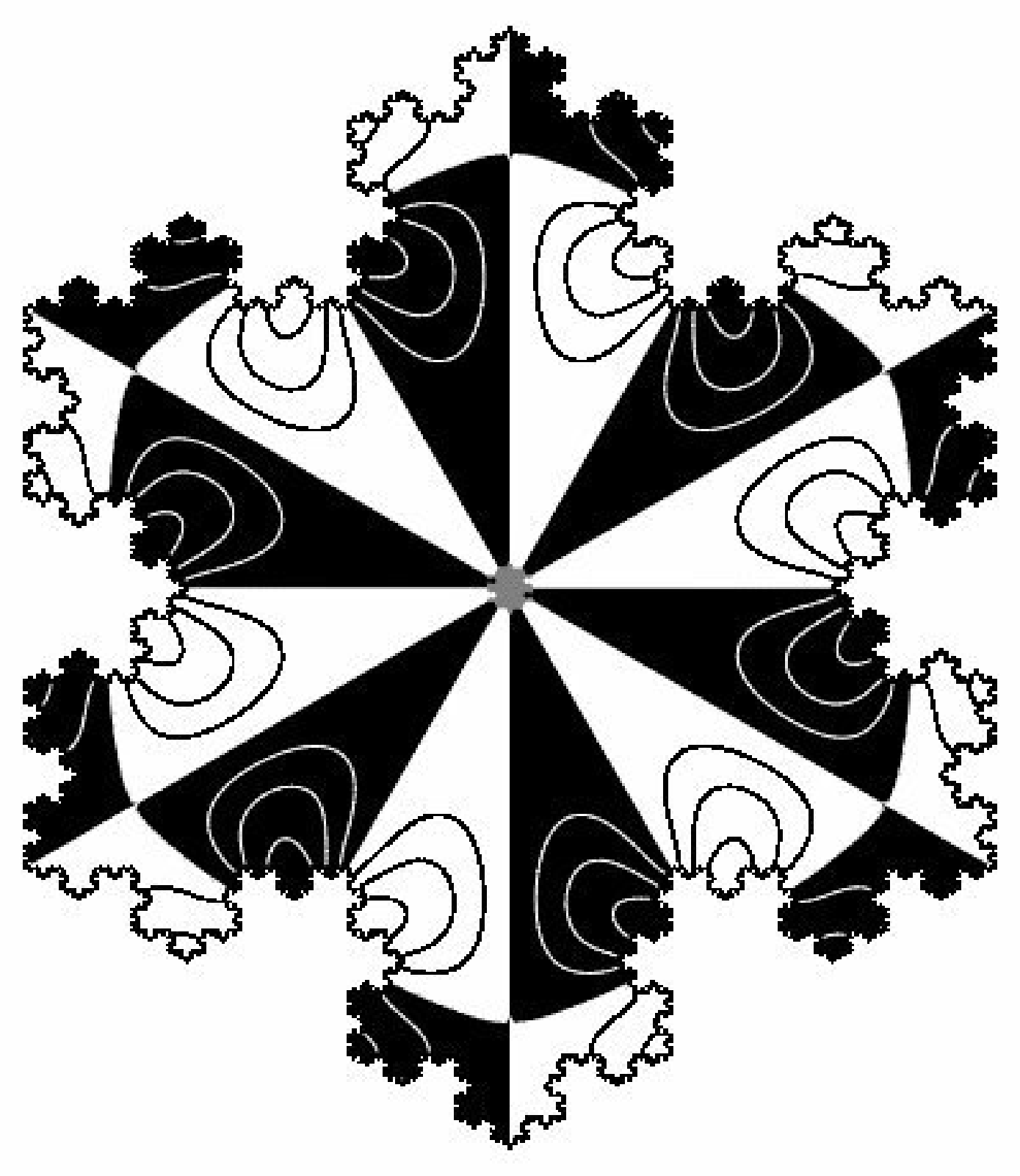}}
&
    \scalebox{.25}{\includegraphics{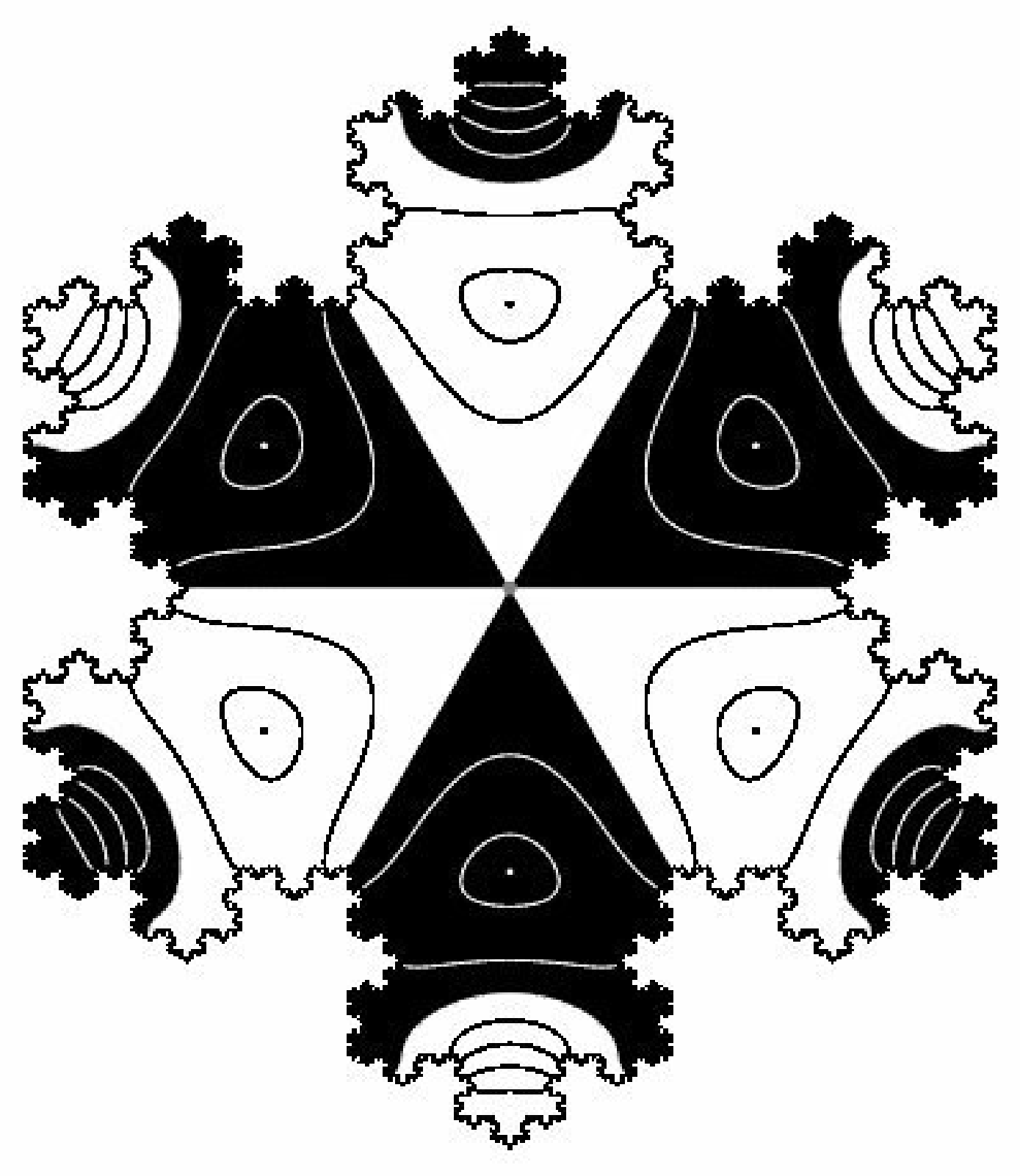}}
&
    \scalebox{.25}{\includegraphics{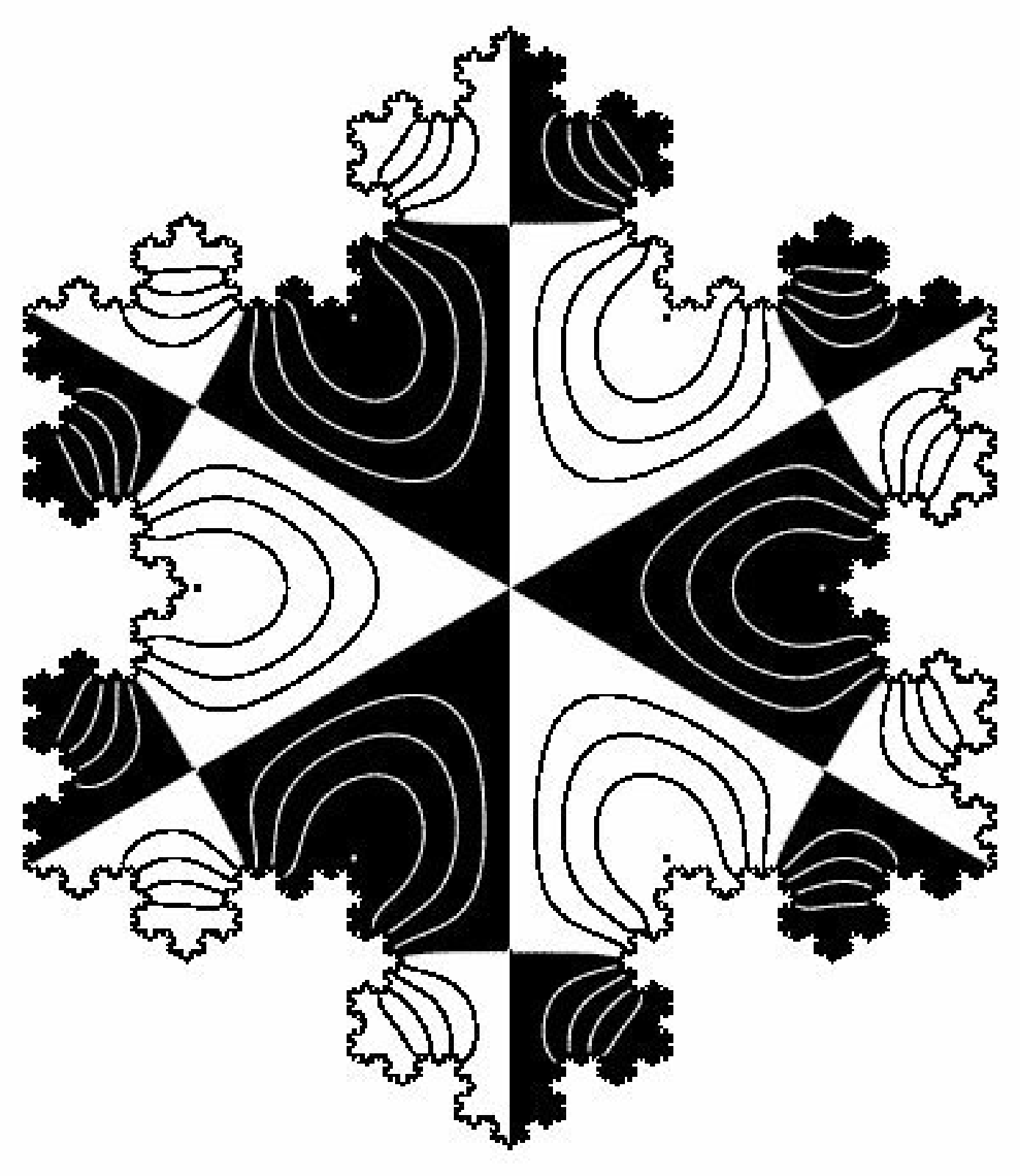}}
    \\

$\psi_7\in V_{++1} := V^{(1)}$ & $\psi_{36}\in V_{--1} := V^{(2)}$ &
$\psi_{19}\in V_{+-1} := V^{(3)}$ & $\psi_{20}\in V_{-+1} := V^{(4)}$ \\

\hline  & & & \\
    \scalebox{.25}{\includegraphics{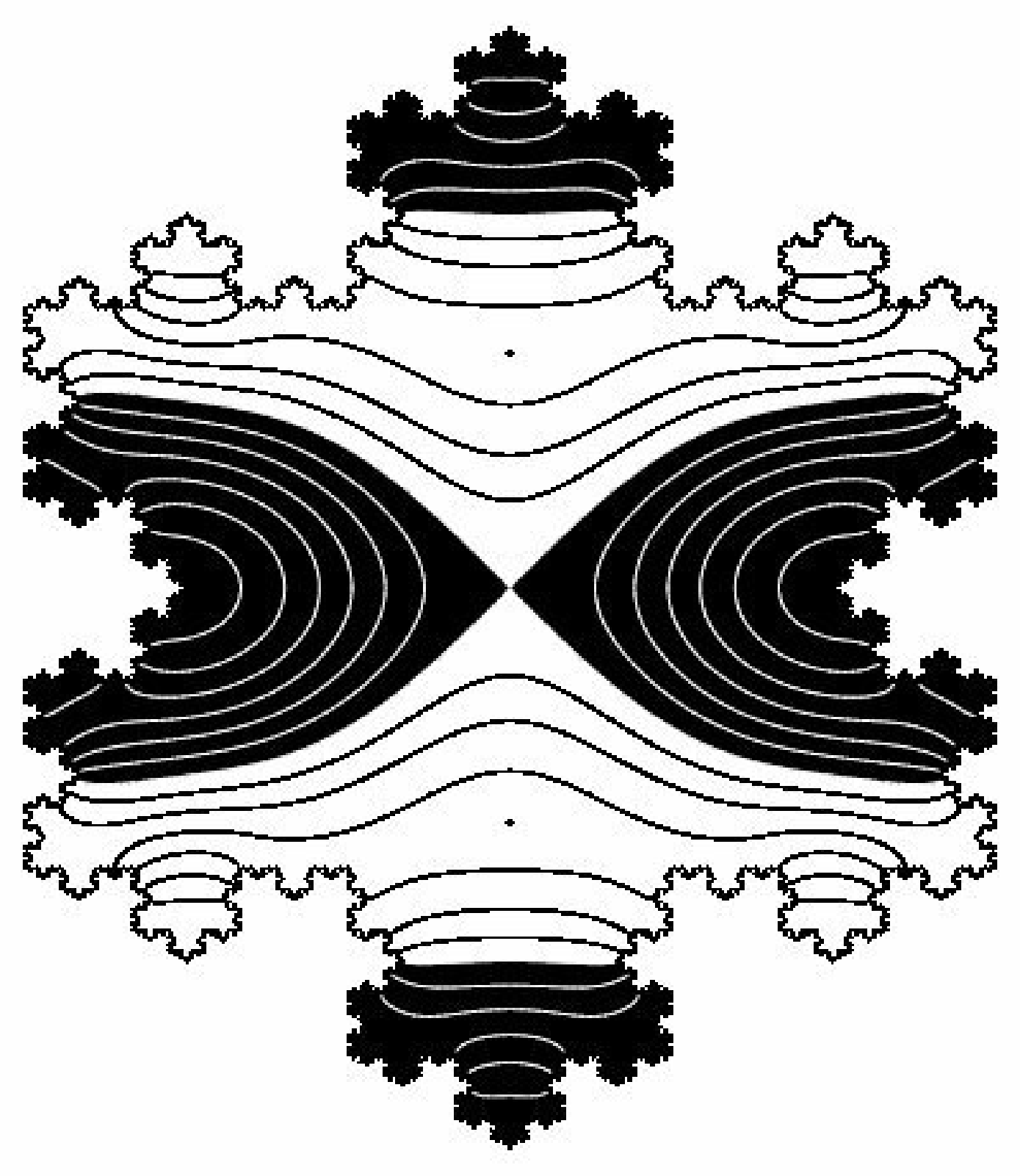}}
&
    \scalebox{.25}{\includegraphics{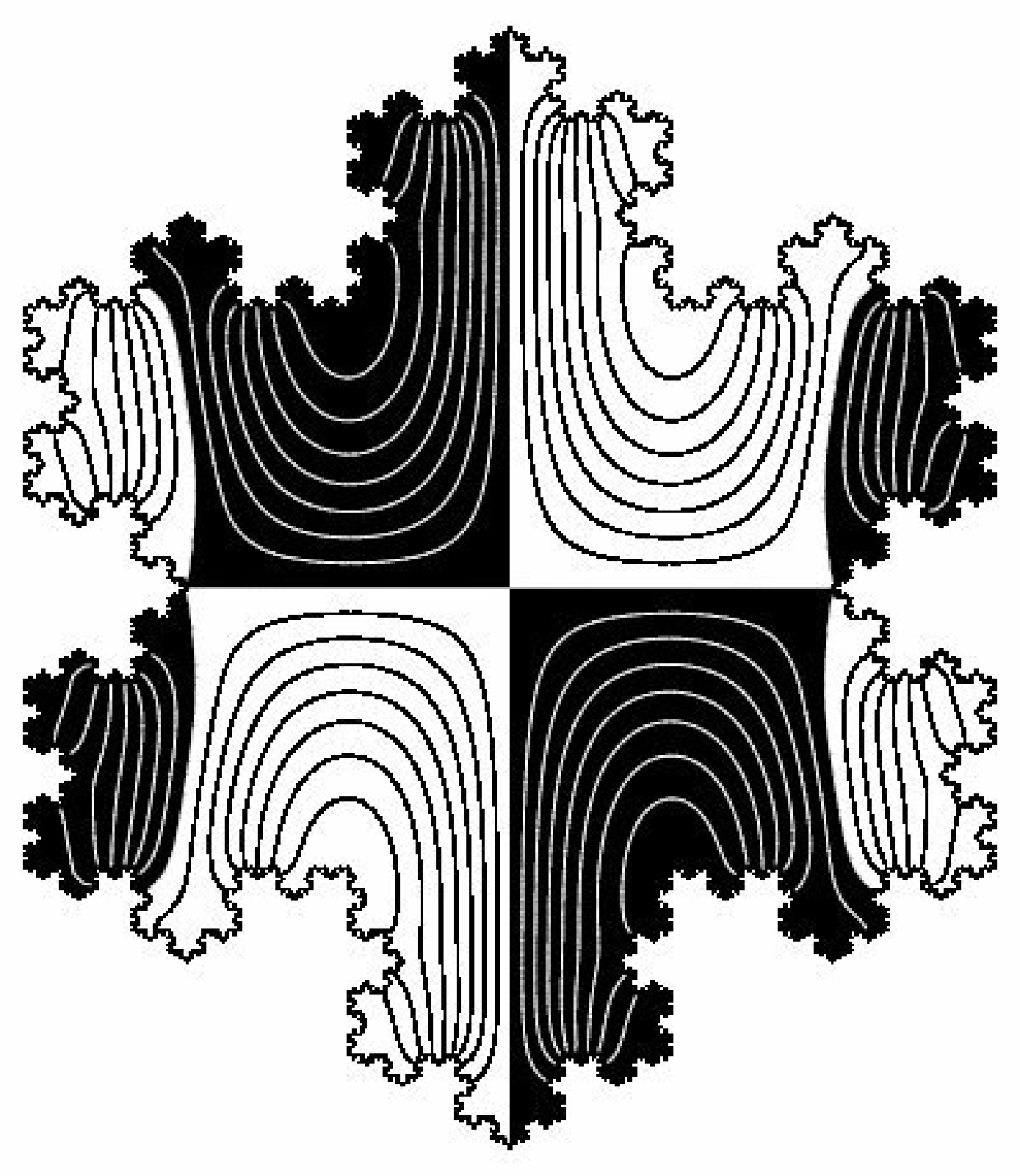}}
&
    \scalebox{.25}{\includegraphics{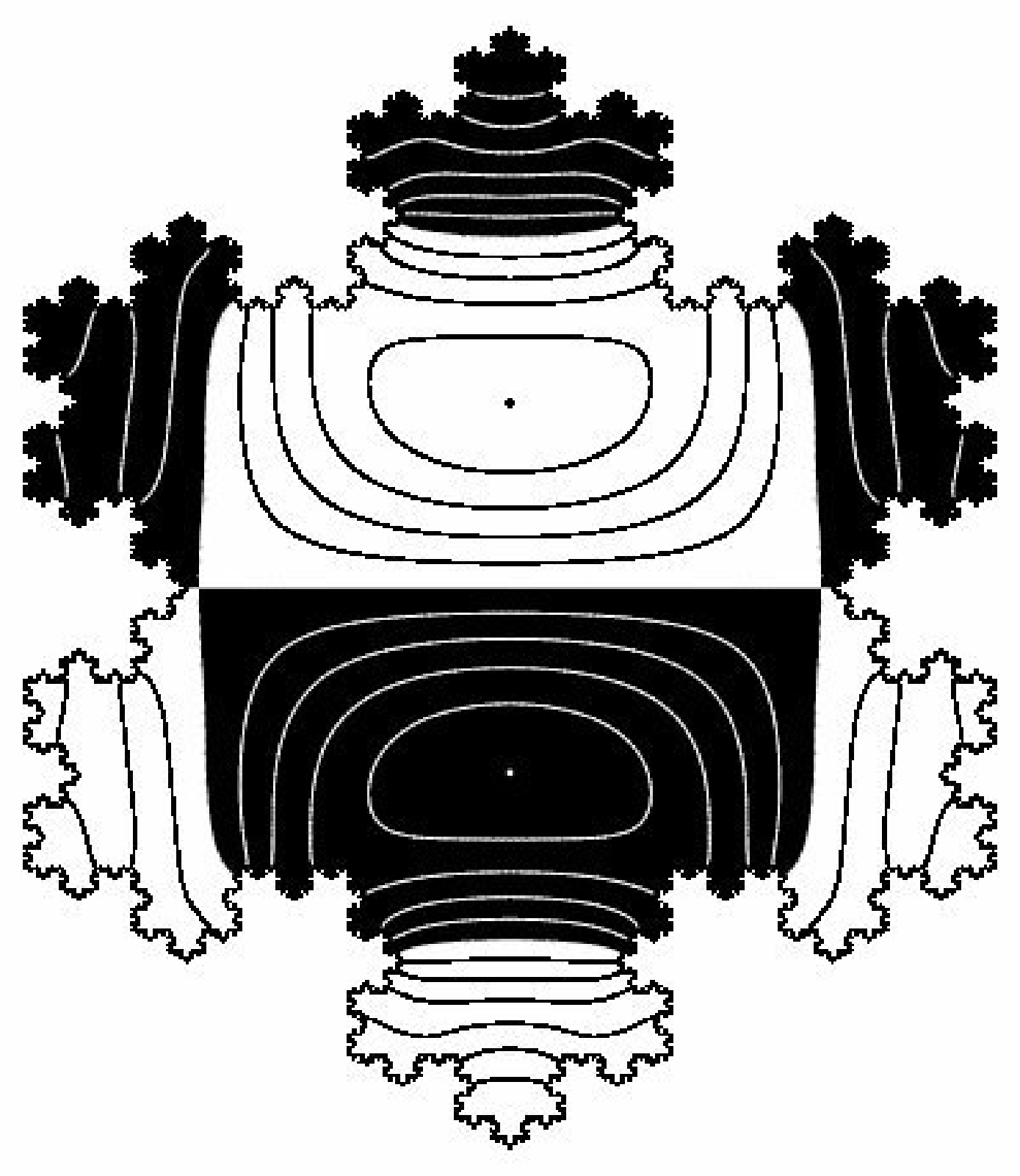}}
&
    \scalebox{.25}{\includegraphics{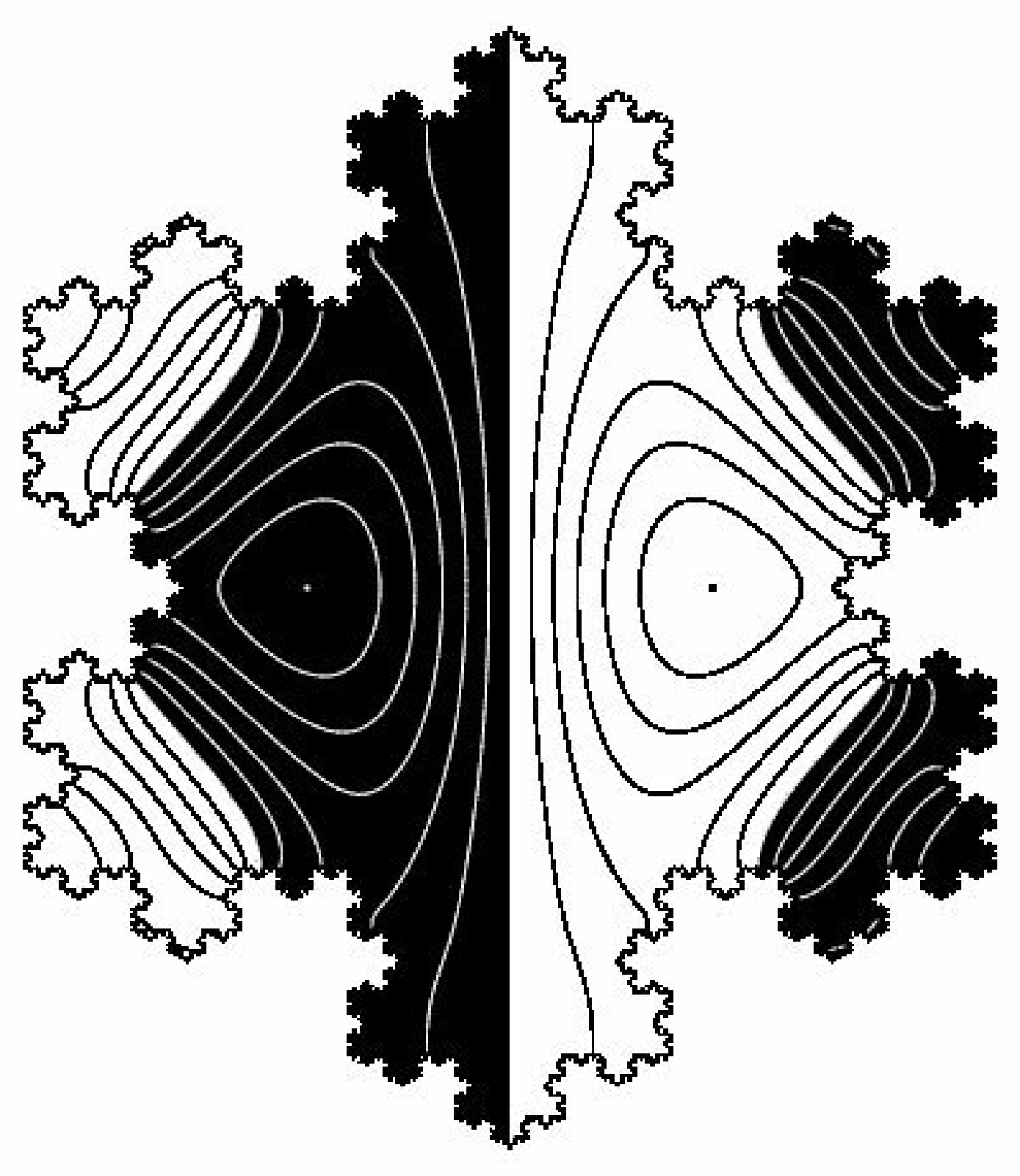}}
    \\
$\psi_{10}\in V_{++2} := V^{(5)}_1$ & $\psi_{11}\in V_{--2} := V^{(5)}_2$ &
$\psi_{8}\in V_{+-2} := V^{(6)}_1$ & $\psi_{9}\in V_{-+2} := V^{(6)}_2$
\\ \hline
\end{tabular}
\vspace*{.2in}
\caption{The {\em second} occurrences of the 8 symmetry types of
eigenfunctions of the Laplacian, with Neumann boundary conditions.
Compare with Figure \ref{eight_sym}.
}
\label{eight_sym_Neumann}
\end{center}
\end{figure}

We now explain why replacing each eigenfunction with the largest projection gives two linearly independent
eigenfunctions.
Suppose $\psi$ and $\phi$ are two eigenfunctions with the same eigenvalue.
Then either $\psi, \phi \in V^{(5)} = V_{++2} \oplus V_{--2}$ or
$\psi, \phi \in V^{(6)} = V_{+-2} \oplus V_{-+2}$.
Assume that we are in the first case.  The eigenfunctions are orthonormal, so
that $||\psi||^2 = ||\phi||^2 = 1$ and the inner product $(\psi, \phi) = 0$.  We want to
replace $\psi$ and $\phi$ with normalized eigenfunctions
$\psi_{++} \in V_{++ 2}$ and $\psi_{--} \in V_{-- 2}$.
We can write
$\psi = a_\psi \psi_{++} + b_\psi \psi_{--}$ and $\phi = a_\phi \psi_{++} + b_\phi \psi_{--}$,
where $||\psi||^2 = a_\psi^2 + b_\psi^2 = 1$, $||\phi||^2 = a_\phi^2 + b_\phi^2 = 1$,
and $(\psi, \phi) = a_\psi a_\phi + b_\psi b_\phi = 0$.  In other words,
$\langle a_\phi, b_\phi \rangle$ and
$\langle a_\psi, b_\psi \rangle$
are two orthogonal unit vectors in the $a$-$b$ plane.  Therefore, $a_\psi^2 \geq
b_\psi^2$ if and only if $b_\phi^2 \leq a_\phi^2$.  Neglecting the numerical
coincidence
of $a_\psi^2 = b_\psi^2$, this implies that replacing $\psi$ and $\phi$
by their largest projection will include eigenfunctions of both symmetry types.
A similar argument holds for pairs of eigenfunctions in $V_{+-2} \oplus V_{-+2}$.

Figures \ref{eight_sym}  and \ref{eight_sym_Neumann} shows eigenfunctions with each of the 8 symmetry types
for the Dirichlet and Neumann eigenvalue problems, respectively.
More eigenfunction contour plots can be found at \texttt{http://jan.ucc.nau.edu/$\sim$ns46/snow/}.

\end{section}
\begin{section}{Conclusions.}
Table \ref{summary} summarizes our best estimates of the eigenvalues, and the
symmetry types of the eigenfunctions, for the first few eigenvalues of the
linear problem (\ref{lpde}).
We have no error bounds
but we can get an indication of the accuracy by comparing
our results with those in Banjai's thesis \cite{banjai}.
Banjai computed the first 20 eigenvalues of the Dirichlet problem,
and our eigenvalues agree with his results
to four significant figures. 
This suggests that our results are accurate to four significant figures.
Banjai's results for the first ten Dirichlet eigenvalues are more precise.
\begin{table}[htbp]
\label{summary}
\begin{center}
\begin{tabular}{|c|cc|cc|}
\hline
$k$ & $\lam^{\rm R}_k$ (D) & Symmetry & $\lam^{\rm R}_k$ (N) & Symmetry  \\
\hline
1   &   39.349   &   $   ++1   $   &   0   &   $   ++1   $   \\
2   &   97.438   &   $   +-2   $   &   11.842   &   $   +-2   $   \\
3   &   97.438   &   $   -+2   $   &   11.842   &   $   -+2   $   \\
4   &   165.41   &   $   ++2   $   &   23.047   &   $   ++2   $   \\
5   &   165.41   &   $   --2   $   &   23.047   &   $   --2   $   \\
6   &   190.37   &   $   ++1   $   &   27.426   &   $   +-1   $   \\
7   &   208.62   &   $   +-1   $   &   52.210   &   $   ++1   $   \\
8   &   272.41   &   $   +-2   $   &   85.552   &   $   +-2   $   \\
9   &   272.41   &   $   -+2   $   &   85.552   &   $   -+2   $   \\
10   &   312.36   &   $   -+1   $   &   112.02   &   $   ++2   $   \\
11   &   314.45   &   $   ++2   $   &   112.02   &   $   --2   $   \\
12   &   314.45   &   $   --2   $   &   118.34   &   $   -+1   $   \\
13   &   359.53   &   $   ++1   $   &   139.38   &   $   +-2   $   \\
14   &   425.40   &   $   +-1   $   &   139.38   &   $   -+2   $   \\
15   &   443.54   &   $   +-2   $   &   139.54   &   $   --1   $   \\
16   &   443.54   &   $   -+2   $   &   147.64   &   $   ++1   $   \\
17   &   458.66   &   $   ++2   $   &   151.00   &   $   ++2   $   \\
18   &   458.66   &   $   --2   $   &   151.00   &   $   --2   $   \\
19   &   560.41   &   $   +-2   $   &   183.64   &   $   +-1   $   \\
20   &   560.41   &   $   -+2   $   &   197.50   &   $   -+1   $   \\
21   &   566.79   &   $   ++2   $   &   207.10   &   $   +-2   $   \\
22   &   566.79   &   $   --2   $   &   207.10   &   $   -+2   $   \\
23   &   595.18   &   $   ++1   $   &   216.81   &   $   ++2   $   \\
24   &   617.70  &   $   --1   $   &   216.81   &   $   --2   $   \\
\hline
\end{tabular}
\vspace*{.2in}
\caption{
The first 24 eigenvalues of the negative Laplacian, and the symmetry of the
corresponding eigenfunctions, with Dirichlet or Neumann boundary conditions.
The Richardson extrapolations for the eigenvalues are given.
A comparison with Banjai's thesis suggests that our results are
accurate to four
significant figures.
The symmetry
indicates the space containing the eigenfunction.  For example,
the first eigenfunction is in $V_{++1}$ for both boundary conditions.  We follow the convention
that $V_{++2}$ is listed before $V_{--2}$, and $V_{+-2}$ is listed before
$V_{-+2}$.}
\end{center}
\end{table}

We are ultimately interested in the connections between the linear problem (\ref{lpde})
and superlinear elliptic boundary value problems of the form
\begin{align}
\label{pde}
        \Delta u + f(u) &= 0  \textrm{ in } \Omega \nonumber \\
         u &= 0  \textrm{ on } {\partial \Omega}.
\end{align}
The so-called Gradient-Newton-Galerkin-Algorithm (GNGA, see [NS]) seeks
approximate solutions $u=\sum_{j=1}^{M} a_j \psi^{\ell}_j$ to (\ref{pde})
by applying Newton's method to the eigenfunction expansion coefficients of the gradient
$\nabla J(u)$ of a nonlinear functional $J$ whose critical points are the desired solutions.
In [NSS2], we will enumerate the 23 isotropy subgroups of $\D_6 \times \Z_2$,
along with the associated fixed point subspaces.
These are the possible symmetry types of solutions to (\ref{pde}) when $f$ is odd.
As a result, we will be able to follow the bifurcation branches of (\ref{pde})
by varying the parameter $\lambda$ in the nonlinearity defined by $f(u)=\lambda u + u^3$.
Our symmetry information is crucial to the branch continuation decision making process.
In [NSS2] we will find solutions of all 23 symmetry types.

Our catalog of symmetry information is helpful in another way.
Given that our basis is chosen in a symmetric fashion and
by knowing which invariant subspace a given solution branch's elements lie,
it is often possible to know that many of the eigenfunction expansion coefficients are zero.
The GNGA requires numerical integration using values at the $N=N_{\rm NSS}(\ell)$ grid points
for each coefficient of the gradient $\nabla J(u)$,
as well as for each entry of the Hessian matrix representing $D_2 J(u)$.
If we use a basis of eigenfunctions spanning a subspace of dimension $M$,
this amounts to $M^2+M$ integrations.
Using symmetry, we are able to avoid many of these integrations, resulting in a substantial speedup.
This becomes very important as we seek high-energy solutions with complex nodal structure,
where large values of $M$ and $N$ are required.
Each solution is represented by a point on a bifurcation curve, whereby many solutions are
sought to complete the branch.
Many of the branches of high-energy solutions have a rich bifurcation structure, resulting
in multiple secondary and tertiary branches.
There is a substantial time savings obtained by
cutting down on the number of required integrations for each Newton step
for each solution, on each branch.

In conclusion, this paper presents an efficient, accurate, and easy to implement method for obtaining a basis of
eigenfunctions to a subspace which is sufficiently large for performing the eigenfunction expansions required
by the GNGA method in solving nonlinear problems of the form (\ref{pde}).
The major innovations of this work are in the new way we enforce boundary conditions and in the
way we are able to choose symmetric representatives of eigenfunctions corresponding to multiple eigenvalues.
Our new techniques for generating contour plots are also noteworthy.
By further decomposing function space according to symmetry,
our nonlinear experiments are much more successful (see \cite{nss2}).
We are currently porting the code to a small cluster using the parallel implementation PARPACK.
In the future, we expect to use the techniques of this paper to generate larger numbers of
more accurate eigenvalue/eigenfunction approximations, and in so doing, be able to investigate more
complicated phenomena and/or problems with regions $\Omega$ in ${\mathbb R}^n$, for dimensions $n=3$ and higher.
\end{section}



\begin{thebibliography}{5}

\bibitem[Banjai \& Trefethen, 2003]{bt03}
L. Banjai and L. N. Trefethen,
{\it A Multipole method for Schwarz-Christoffel Mapping of Polygons
with Thousands of Sides},
SIAM J. Sci. Comput. {\bf 25} vol. 3 (2003), pp 1042--1065.

\bibitem[Banjai, 2003]{banjai}
L. Banjai,
{\it Computation of Conformal Maps by Fast Multipole Method
Accelerated Schwarz-Christoffel Transformation},
Ph.D. Thesis, University of Oxford. (Trinity Term 2003).

\bibitem[Lapidus, 1991]{lap91}
M. L. Lapidus,
{\it Fractal Drum, Inverse Spectral Problems for Elliptic Operators
and a Partial Resolution of the Weyl-Berry Conjecture},
Trans. Anmer. Math. Soc. {\bf 325}, pp 465--529.

\bibitem[Lapidus et al., 1996]{lnrg}
M .L. Lapidus, J. W. Neuberger, R. L. Renka, and C. A. Griffith,
{\it Snowflake Harmonics and Computer Graphics: Numerical
Computation of Spectra on Fractal Drums},
International Journal Bifurcation and Chaos {\bf 6} no. 7 (1996), pp 1185--1210.

\bibitem[Lapidus and Pang, 1995]{lp}
M. L. Lapidus and M. M. H. Pang,
{\it Eigenfunctions on the Koch Snowflake Domain},
Commun. Math. Physics {\bf 172} no. 2 (1995), pp 359--376.

\bibitem[Johnson and Riess, 1982]{Jr}
L. Johnson and R. Riess,
{\it Numerical Analysis},
Addison-Wesley : Reading, Mass. (1982).

\bibitem[Lehoucq et al., 1998]{arpack}
R. B. Lehoucq, D. C. Sorensen, and  C. Yang
{\it ARPACK users' guide: Solution of large-scale eigenvalue problems with implicitly restarted Arnoldi methods.
Software, Environments, and Tools.}
Society for Industrial and Applied Mathematics (SIAM) :  Philadelphia, PA. (1998).

\bibitem[Neuberger and Swift, 2001]{ns}
John M. Neuberger and James W. Swift
{\it Newton's method and Morse index for semilinear elliptic PDEs}
International Journal Bifurcation and Chaos  {\bf 11} no. 3 (2001), pp 801--820.

\bibitem[Neuberger, Sieben, and Swift II]{nss2}
John M. Neuberger, Nandor Sieben, and James W. Swift
{\it Symmetry of Solutions to Semilinear Dirichlet Problems on Koch's Snowflake},
preprint, 2003.

\bibitem[Sagan, 1991]{sagan}
B. E. Sagan
{\it The symmetric group}, 
Wadsworth \& Brooks/Cole Adv. Books Software, Pacific Grove, CA, 1991.

\bibitem[Scott, 1964]{scott}
W. R. Scott
{\it Group theory},
Prentice Hall, Englewood Cliffs, N.J., 1964.

\bibitem[Sternberg, 1994]{sternberg}
S. Sternberg,
{\it Group Theory and Physics}
Cambridge University Press, Cambridge (1994).

\bibitem[Tinkham, 1964]{tinkam}
M. Tinkham,
{\it Group Theory and Quantum Mechanics}
McGraw-Hill Inc., New York (1964).

\end{thebibliography}
\end{document}